\documentclass[A4paper]{article}

\usepackage{lipsum}        

\usepackage[a4paper,top=3cm,bottom=2cm,left=3cm,right=3cm,marginparwidth=1.75cm]{geometry}
\usepackage[utf8]{inputenc}
\usepackage{authblk}
\usepackage{amsmath}
\usepackage{mathtools}
\usepackage{cases}
\usepackage{bm}
\usepackage{amsfonts}
\usepackage{subfiles}
\usepackage{multicol}
\usepackage{tabularx,enumerate}
\usepackage{subcaption}
\usepackage{tikz}
\usepackage{caption}
\usepackage[absolute,overlay]{textpos}
\usepackage{ifthen}
\usepackage{tcolorbox}
\usepackage{mathrsfs}
\usepackage{scalerel}
\usepackage{graphics}

\usepackage{soul}


\usepackage{comment}

\setlength {\marginparwidth }{2cm} 
\usepackage{todonotes}


\tcbuselibrary{theorems}
\tcbuselibrary{skins}

\usepackage{algorithm}
\usepackage{algpseudocode}

\usepackage{mycommands}
\usepackage{xcolor}


\usepackage{amsthm,amssymb}
\newtheorem{theorem}{Theorem}[section]

\newtheorem{lemma}[theorem]{Lemma}

\newtheorem{remark}[theorem]{Remark}

\newtheorem{assum}{Assumption}

\theoremstyle{definition}

\usepackage[toc,page]{appendix} 
\usepackage{hyperref}
\hypersetup{
     colorlinks   = true,
     linkcolor    = blue,
     citecolor    = red
}



\title{Global non-asymptotic super-linear convergence rates of regularized proximal quasi-Newton methods on non-smooth composite problems}
\author[*]{Shida Wang}
\author[**]{Jalal Fadili}
\author[*]{Peter Ochs}
\affil[*]{Department of Mathematics and Computer Science, Saarland University, Germany}
\affil[**]{Normandie Universit\'e, ENSICAEN, UNICAEN, CNRS, GREYC, France.}

\date{\today}

\begin{document}
\maketitle
\begin{abstract}
    In this paper, we propose two regularized proximal quasi-Newton methods with symmetric rank-1 update of the metric (SR1 quasi-Newton) to solve non-smooth convex additive composite problems. Both algorithms avoid using line search or other trust region strategies. For each of them, we prove a super-linear convergence rate that is independent of the initialization of the algorithm. The cubic regularized method achieves a rate of order $\left(\frac{C}{N^{1/2}}\right)^{N/2}$, where $N$ is the number of iterations and $C$ is some constant, and the other gradient regularized method shows a rate of the order $\left(\frac{C}{N^{1/4}}\right)^{N/2}$. To the best of our knowledge, these are the first global non-asymptotic super-linear convergence rates for regularized quasi-Newton methods and regularized proximal quasi-Newton methods. The theoretical properties are also demonstrated in two applications from machine learning.
\end{abstract}
\section{Introduction}
Newton-type methods have been studied extensively over decades due to their fast convergence. However, the complexity of the computational cost of Newton's method per iteration is cubic, making it intractable for large-scale applications. Therefore, concurrently, quasi-Newton methods have been developed to avoid the explicit computation of the Hessian (second derivative) of the objective \cite{dennis1977quasi}. The idea of quasi-Newton methods is to approximate the Hessian by a matrix that is generated from first-order information, for example, using the difference of the gradient of the objective at nearby points. Based on this idea, numerous variants have been developed, including BFGS \cite{broyden1970convergence}, SR1 \cite{davidon1991variable,broyden1967quasi}, DFP \cite{davidon1991variable,fletcher1963rapidly}. Classically, the convergence guarantee of both, Newton's method and quasi-Newton methods, are local, i.e., they require the starting point to be sufficiently close to an optimal point, unless globalization strategies such as trust region or line search are applied \cite{conn2000trust}. This picture has changed thanks to the pioneering work of Nesterov and Polyak \cite{nesterov2006cubic}. They propose a cubic regularization strategy for Newton's method that guarantees global convergence without line search. More recently, following the same goal, gradient regularization was introduced to Newton's method \cite{mishchenko2023regularized}. While these regularization strategies stabilize the algorithm and allow for global convergence, they do not remedy the enormous computational cost of involving the Hessian of the objective in the update step. Therefore, it is natural to take this as inspiration for the design of regularized quasi-Newton methods that converge globally and are also applicable to large-scale problems. However, even for strongly convex functions, the question remains:\begin{center}
    \emph{how can we design a globally convergent cubic- (or gradient-) regularized quasi-Newton method following the spirit of the pioneering works \cite{nesterov2006cubic,mishchenko2023regularized}.}
\end{center}

A striking property of quasi-Newton-type methods is their super-linear rate of convergence  \cite{broyden1970convergence,broyden1973local,dixon1972quasi,fletcher1970new,goldfarb1970family}, which however in most cases can only be proved locally or are only asymptotic rates. 
 Recently, \cite{rodomanov2021greedy} provides the first non-asymptotic super-linear convergence rate for greedy quasi-Newton methods, which is refined to explicit rates for a restricted Broyden family of quasi-Newton methods in  \cite{rodomanov2022rates, jin2023non}. In \cite{ye2023towards}, the first non-asymptotic explicit super-linear convergence rate for the SR1 method is proved. While these rates are non-asymptotic, they are still local, meaning that the initialization is required to be sufficiently close to an optimal point. In this sense, a local region around an optimal point is defined in which the super-linear rate of convergence can be observed. This raises the natural question of
\emph{whether it is possible to design quasi-Newton methods with global convergence and global non-asymptotic super-linear convergence rates.}

In this paper, we give an affirmative answer to both questions above by proposing cubic- and gradient-regularized quasi-Newton methods. In Table~\ref{table:comparisonsmooth}, we list the main features and limitations of the state-of-the-art quasi-Newton methods and arrange our proposed regularized quasi-Newton methods in this context, which we explain in more detail in the following. Unlike other works mentioned in Table~\ref{table:comparisonsmooth} and Table~\ref{tab:comparison}, our methods attain a non-asymptotic super-linear rate of convergence that is independent of the initialization and  without using expensive line search or trust region sub-routines. The globalization is achieved via regularization and the regime of super-linear convergence is encoded in the number of iterations instead of a hard-to-estimate local region around an optimal point. Instead of assuming the Dennis--Mor\'e criterion for convergence \cite{chen1999proximal,lee2012proximal}, which is usually difficult to validate, we rely on Lipschitz continuity of the Hessian instead. 
\begin{table}[htp]
\begin{tabular}{|lll|}
\hline
\multicolumn{3}{|c|}{State-of-the-art quasi-Newton-type methods with super-linear rate of convergence} \\ \hline
\multicolumn{1}{|l||}{Scheme}                       & \multicolumn{1}{c|}{Rate} & \multicolumn{1}{c|}{Local region of super-linear convergence}  \\ \hline\hline
\multicolumn{1}{|l||}{DFP \cite{rodomanov2022rates}}                       & \multicolumn{1}{c|}{ $\left(\frac{nL^2}{\mu^2 N}\right)^{N/2}$  }                 & \multicolumn{1}{c|}{ $\frac{L_H}{\mu^{3/2}}\lambda_f(x_0)\leq \frac{\log \frac{3}{2} \mu}{4L}$}                                 \\ \hline
\multicolumn{1}{|l||}{BFGS \cite{rodomanov2022rates}}                       & \multicolumn{1}{c|}{ $\left(\frac{nL}{\mu N}\right)^{N/2}$ }                 & \multicolumn{1}{c|}{ $\frac{L_H}{\mu^{3/2}}\lambda_f(x_0)\leq \frac{\log \frac{3}{2} \mu}{4L}$}                                        \\ \hline
\multicolumn{1}{|l||}{DFP \cite{jin2023non}}                         & \multicolumn{1}{c|}{$\left(\frac{1}{ N}\right)^{N/2}$  }                 & \multicolumn{1}{c|}{ $\frac{L_H}{\mu^{3/2}}\norm[]{\nabla^2 f(x_*)^{1/2}(x_0-x_*)}\leq \min\{\frac{1}{120},\frac{1}{7\sqrt{n}}\}$}                                  \\ \hline
\multicolumn{1}{|l||}{BFGS \cite{jin2023non}}                         & \multicolumn{1}{c|}{$\left(\frac{1}{ N}\right)^{N/2}$  }                 & \multicolumn{1}{c|}{ $\frac{L_H}{\mu^{3/2}}\norm[]{\nabla^2 f(x_*)^{1/2}(x_0-x_*)}\leq \min\{\frac{1}{50},\frac{1}{7\sqrt{n}}\}$}                             \\ \hline
\multicolumn{1}{|l||}{SR1 \cite{ye2023towards}}                          & \multicolumn{1}{c|}{$\left(\frac{2n\log4\varkappa}{N}\right)^{N/2}$}                 & \multicolumn{1}{c|}{ $\frac{L_H}{\mu^{3/2}}\lambda_f(x_0)\leq\frac{\log \frac{3}{2}}{4\sqrt{\frac{3}{2}}}\max\{\frac{1}{2\varkappa}, \frac{1}{3n\log4\varkappa+3}\}$}                                          \\ \hline
\multicolumn{1}{|l||}{Cubic SR1 PQN (Alg~\ref{Alg:SR1_da_PQN})  }    & \multicolumn{1}{c|}{ $\left(\frac{C}{N^{1/2}}\right)^{N/2} $}                 & \multicolumn{1}{c|}{global}                   \\ \hline
\multicolumn{1}{|l||}{Grad  SR1 PQN (Alg~\ref{Alg:SR1_da_PQN2})} & \multicolumn{1}{c|}{$\left(\frac{C_{\mathrm{grad}}}{N^{1/4}}\right)^{N/2} $}            & \multicolumn{1}{c|}{global}                                \\ \hline
\end{tabular}
\centering
\caption{We compare our methods with classical quasi-Newton methods on the smooth problem $\min_{x\in\R^n} f$ where $f$ has $L$-Lipschitz gradient and is $\mu$-strongly convex. Here, we rewrite the results in our notations. $N$ is the number of iterations, $x_*$ denotes the optimal point, $\lambda_f(x_0):=\norm[\nabla^2 f(x_0)^{-1}]{\nabla f(x_0)}$ and $\varkappa\coloneqq\frac{L}{\mu}$. $C$, $C_{\mathrm{grad}}$ are constants (see Theorems~\ref{thm:main-cubic} and \ref{thm:main-grad}). }
\label{table:comparisonsmooth}
\end{table}

\begin{table}[htp]

\begin{tabular}{|llllc|}
\hline
\multicolumn{5}{|c|}{Comparison of recent works on convergence rates of quasi-Newton methods}                                                                                                                       \\ \hline
\multicolumn{1}{|l||}{Results}        & \multicolumn{1}{l|}{Non-asymptotic} & \multicolumn{1}{l|}{Explicit} & \multicolumn{1}{l|}{Global} & \begin{tabular}[c]{@{}c@{}}Without Line search\\ and trust region strategy\end{tabular} \\ \hline\hline
\multicolumn{1}{|l||}{\begin{tabular}[c]{@{}c@{}}Broyden family\\ \cite{rodomanov2021greedy,rodomanov2022rates,jin2023non}\end{tabular}} & \multicolumn{1}{c|}{Yes}          & \multicolumn{1}{c|}{Yes}      & \multicolumn{1}{c|}{No}     & Yes                                                                         \\ \hline
\multicolumn{1}{|l||}{SR1 \cite{ye2023towards}}            & \multicolumn{1}{c|}{Yes}          & \multicolumn{1}{c|}{Yes}      & \multicolumn{1}{c|}{No}     & Yes                                                                         \\ \hline
\multicolumn{1}{|l||}{BFGS \cite{jin2024non}}           & \multicolumn{1}{c|}{Yes}          & \multicolumn{1}{c|}{Yes}      & \multicolumn{1}{c|}{Yes}    & No (exact line search)                                               \\ \hline
\multicolumn{1}{|l||}{BFGS \cite{jin2024nonAmijo,rodomanov2024global}}           & \multicolumn{1}{c|}{Yes}          & \multicolumn{1}{c|}{Yes}      & \multicolumn{1}{c|}{Yes}    & No (inexact line search)                                             \\ \hline
\multicolumn{1}{|l||}{BFGS \cite{jiang2023online}}           & \multicolumn{1}{c|}{Yes}          & \multicolumn{1}{c|}{Yes}      & \multicolumn{1}{c|}{Yes}    & \multicolumn{1}{c|}{Yes (online learning strategy)}                   \\ \hline
\multicolumn{1}{|l||}{Our Alg \ref{Alg:SR1_da_PQN} and~\ref{Alg:SR1_da_PQN2}}           & \multicolumn{1}{c|}{Yes}          & \multicolumn{1}{c|}{Yes}      & \multicolumn{1}{c|}{Yes}    & \multicolumn{1}{c|}{Yes (cubic- or grad-regularization)}                  \\ \hline
\end{tabular}
\centering
\caption{Works related to ours that have achieved local or global convergence with asymptotic or non-asymptotic explicit  super-linear rate of convergence.}
\label{tab:comparison}
\end{table}

The analysis of our algorithms is built on a novel Lyapunov-type potential function that is in fact simpler than those employed in related work. We summarize recently used potential functions in Table~\ref{table:PotentialFunction}. While our algorithms and analysis already demonstrate a significant contribution for smooth optimization problems, furthermore, we consider non-smooth additive composite problems for which proximal gradient type algorithms are the standard choice for the large-scale regime. Such problems frequently appear in several areas in machine learning, computer vision, image or signal processing, and statistics. In this structured non-smooth case, the quasi-Newton metric is constructed using the smooth part of the objective function, whereas the non-smooth part of the objective is handled via a proximal step that is computed in the same metric. 

\begin{table}[htp]
\begin{tabular}{|ll|}
\hline
\multicolumn{2}{|c|}{Potential function}                                                                 \\ \hline\hline
\multicolumn{1}{|l|}{Broyden Family \cite{rodomanov2022rates}}     & $\sigma(A,G)=\trace(A^{-1}(G-A))$ for $A\preceq G$            \\ \hline
\multicolumn{1}{|l|}{Broyden Family \cite{rodomanov2022rates,byrd1987global}}     & $   \sigma(A,G)= \trace(A^{-1}(G-A))-\log \det(A^{-1}G)$ for $A\preceq G$ \\ \hline
\multicolumn{1}{|l|}{Broyden Family \cite{jin2023non,broyden1973local}}     & Frobenius-norm involved potential functions                            \\ \hline
\multicolumn{1}{|l|}{SR1\cite{ye2023towards}, Broyden Family \cite{rodomanov2021new}   }             & $\sigma(A,G)=\log\det (A^{-1}G)$  and $\trace(G-A)$   for $\sigma(A,G)=A\preceq G $                    \\ \hline
\multicolumn{1}{|l|}{Greedy SR1\cite{lin2021greedy}}                &  $\sigma(A,G)=\trace(G-A)$   for $A\preceq G $                    \\ \hline
\multicolumn{1}{|l|}{Our Cubic/Grad SR1 PQN} & $V(G) = \trace G$                                            \\ \hline
\end{tabular}
\centering
\caption{We list the potential functions introduced or used in the literature to prove super-linear convergence rates of quasi-Newton methods along with our potential function. Here $\trace G$ denotes the trace of the matrix $G$ and $\det G$ denotes the determinant of the matrix $G$. }
\label{table:PotentialFunction}
\end{table}


\section{Related Works}

\paragraph{Regularized Newton method.} In the classical literature on Newton's method \cite{NoceWrig06}, local super-linear and quadratic rates of convergence for the pure Newton's method can be proved. In order to achieve global convergence, certain globalization strategies must be used such as line search or trust region approaches. Sometimes global convergence can be obtained while maintaining the local fast convergence \cite{Dennis}. There are other regularization strategies that add a positive definite matrix to the Hessian (like in the Levenberg--Marquardt method \cite{LEVENBERG,marquardt1963algorithm}) in order to stabilize Newton's method and to obtain global convergence thanks to having a positive definite metric. However, in general, this changes the Hessian and global convergence is obtained at the cost of loosing the local fast convergence rates. The pioneering work by Nesterov and Polyak \cite{nesterov2006cubic} has introduced cubic regularization as another globalization strategy that avoids line search and at the same time shows the standard fast local convergence rates of the pure Newton's method. In order to remedy the computational cost of cubic regularization, in \cite{mishchenko2023regularized,doikov2024gradient} a gradient regularization strategy was introduced that allows for global convergence with a super-linear rate of convergence for strongly convex problems. \emph{These works have fundamentally motivated the present paper and the development of our cubic and gradient regularized quasi-Newton methods that combines the best of both worlds: a global explicit super-linear rate of convergence while preserving a low computational cost per iteration by using only first-order information.}

\paragraph{Quasi-Newton methods.} Rodomanov and Nesterov \cite{rodomanov2021greedy} obtained the first local explicit super-linear convergence rate for greedy quasi-Newton methods and \cite{ye2023towards} obtained the first local explicit super-linear convergence rate for the SR1 quasi-Newton method, which both inspired our development in this paper. In subsequent works, \cite{rodomanov2022rates,jin2023non} proved the local non-asymptotic super-linear convergence rate for the classical Broyden family of quasi-Newton methods (Table~\ref{table:comparisonsmooth} and Table~\ref{tab:comparison}). 
However, all results above require the starting point to lie close to the optimal point to achieve the super-linear rate of convergence, unless additionally a line search strategy is used.  Therefore, it is worth looking for a global non-asymptotic super-linear convergence rate. The recent paper \cite{jiang2023online} showed a global non-asymptotic super-linear convergence rate by using an online learning strategy.
For BFGS with an exact line search strategy, \cite{jin2024non} obtain a global non-asymptotic convergence rate. The rate is linear at the beginning and super-linear when the iterates approach the local super-linear convergence region of the BFGS method. Later, \cite{rodomanov2024global,jin2024nonAmijo} achieve similar results with an inexact line search strategy, independently. 
\emph{In our paper, we combine the idea of regularization (cubic, gradient) and SR1 method and propose methods that are shown to converge globally with a global non-asymptotic super-linear rate without the help of globalization strategies (line search, trust region method).}

There are a few more works, appearing under similar names like `regularized Newton-type method' \cite{8804172,kanzow2022efficient,benson2018cubic,bianconcini2015use,lu2012trust,gould2012updating}. Since they do not consider non-asymptotic super-linear convergence rates and are using line search or trust region strategies, we do not describe them in more detail here.

\paragraph{Potential functions for convergence analysis.}
To obtain the non-asymptotic super-linear convergence, potential functions are widely used. We would like to mention several seminal papers on selecting potential functions (listed in Table~\ref{table:PotentialFunction}). \cite{broyden1973local} used the Frobenius-norm function for studying the Broyden family of quasi-Newton methods. Later,  an important contribution was made by Byrd, Liu, Nocedal and Yuan in \cite{byrd1987global}, who introduced the useful potential function $\sigma(A,G)=\trace(A^{-1}(G-A))-\log \det A^{-1}G$. \cite{rodomanov2022rates} used the first part of this function $\sigma(A,G)=\trace(A^{-1}(G-A))$ to show the convergence of the Broyden family of quasi-Newton methods on quadratic functions. A similar trace function $\trace(G-A)$ was first introduced in \cite{lin2021greedy}. In \cite{ye2023towards}, the authors also used the function $\trace(G-A)$ to study the quadratic case. However, we must emphasize that for their proofs, the existence of $A$ is crucial and necessary. That is the reason why we regard our potential function, despite its apparent similarity to the previous ones,, as a novel contribution. \emph{We propose the potential function $V(G)=\trace G$ which is much simpler than other potential functions while it also eases the proof of our non-asymptotic global super-linear convergence rate. Moreover, it provides a simple criterion for restarting the quasi-Newton method to guarantee the global convergence of our gradient regularized Algorithm~\ref{Alg:SR1_da_PQN2} (and its variant in Algorithm~\ref{Alg:SR1_da_PQN3}).}

\paragraph{Proximal quasi-Newton and Newton-type methods.}
One of the earliest work on proximal quasi-Newton methods is \cite{chen1999proximal}, where they adopted line search to guarantee global convergence and assume the Dennis--Mor\'e criterion to obtain asymptotic super-linear convergence.  \cite{lee2012proximal} also demonstrated an asymptotic super-linear local convergence rate of the proximal Newton and a proximal quasi-Newton method under the same condition. We believe that the Dennis--Mor\'e criterion is very restrictive and hard to check. Therefore, we rather use Lipschitz continuity of the Hessian instead. Later, \cite{scheinberg2016practical} showed that by a prox-parameter update mechanism, rather than a line search, they can derive a sub-linear global complexity. However, to the best of our knowledge, there is no result on non-asymptotic explicit super-linear convergence rates for a proximal quasi-Newton method. While the works discussed so far focus on the convergence rate, general convergence of so-called variable metric proximal gradient methods was also studied intensively. A crucial topic, which is mostly ignored, is the practical computation of the proximal mapping with respect to the metric that is induced by the quasi-Newton metric. Usually, simple proximal mappings in the standard metric become hard to solve when the metric is changed. There are a few works that are dedicated to this topic, proposing a proximal calculus tailored to exactly this situation. This was initiated in \cite{becker2012quasi} and consolidated in \cite{becker2019quasi}, where the interested reader finds an extensive literature review around this topic. Several follow up works have extended and improved these results \cite{scheinberg2016practical,karimi2017imro,becker2019quasi,wang2022inertial,kanzow2022efficient}. 

\section{Problem Setup and Main Results}
In this section, we present our notation, the considered problem setup with all standing assumptions, our proposed regularized proximal SR1-quasi-Newton algorithms and the main convergence results. We postpone the technical parts of the convergence analysis to Section~\ref{Section:conv}.

\subsection{Notation}
We first introduce our notations. We denote $\overline \R=\R\cup{\{+\infty\}}$. $\R^n$ is an Euclidean vector space of dimension $n$, which is equipped with the standard Euclidean norm $\vnorm{u}:=\sqrt{u^\top u}$, for any $u\in \R^n$.  Given a matrix $A\in\R^{n\times n}$, $\norm[]{A}$ denotes the matrix norm induced by the Euclidean vector norm. We write  $\trace A$ for the trace of the matrix $A\in\R^{n\times n}$. Moreover, $\opid$ denotes the identity matrix in $\R^n$. We adopt the standard definition of the Loewner partial order of symmetric positive semi-definite matrices. Let $A$ and $B$ be two symmetric matrices, we say $A\preceq B$ (or $A\prec B$) if and only if for any $u\in\R^n$, we have $u^\top(B-A)u\geq 0$ (or $u^\top(B-A)u> 0$, respectively).

\subsection{Problem set up}
In the whole paper, we consider the following optimization problem:
\begin{equation}\label{main:problem}
    \min_{x\in\R^n} F(x)\coloneqq g(x) +  f(x)\,,
\end{equation}
where we make the following assumptions:
\begin{assum}\label{ass:main-problem}
\begin{enumerate}
    \item $F= g+f$ is bounded from below;
    \item $\map{g}{\R^n}{\eR}$ is proper, lower semi-continuous (lsc), convex;
    \item $\map{f}{\R^n}{\R}$ is twice differentiable with $L$-Lipschitz gradient, $L_H$-Lipschitz Hessian and $f$ is $\mu$-strongly convex.
\end{enumerate}
\end{assum}

\subsection{Algorithms and main results}
Let us first recap the classical symmetric rank-1 quasi-Newton (SR1) update $G_+\coloneqq\text{SR1}(A,G,u)$. Given two $n\times n$ positive definite matrices $A$ and $G$ such that $0\prec A\preceq G$ and $u\in\R^n$, we define the SR1 update as follows:
\begin{equation}\label{eq:gen-SR1-update}
    \text{SR1}(A,G,u):=\begin{cases}
        G\,, &\ \text{if}\  (G-A)u=0\,;\\
        G-\frac{(G-A)uu^\top(G-A)}{u^\top(G-A)u}\,, & \ \text{otherwise}\,.
    \end{cases}
\end{equation}
In order to solve the problem in \eqref{main:problem}, we propose two regularized proximal SR1 quasi-Newton methods: Algorithms~\ref{Alg:SR1_da_PQN} and~\ref{Alg:SR1_da_PQN2}, where we present a variant of the latter as Algorithm~\ref{Alg:SR1_da_PQN3} in the appendix.\\

\textbf{Algorithm~\ref{Alg:SR1_da_PQN}} is inspired by the cubic regularized Newton method from \cite{nesterov2006cubic}, which is proved to converge globally. Step~\ref{alg:SR1_da_PQN-step1} of Algorithm~\ref{Alg:SR1_da_PQN} augments the classical quasi-Newton update step with a cubic regularization term that is scaled with the Lipschitz constant $L_H$ of the Hessian of $f$. Step~\ref{alg:SR1_da_PQN-step2} defines several variables, including the stationarity measure $F'(x_\kp)$ that is also used to terminate the algorithm in Step~\ref{alg:SR1_da_PQN-step6}. In Step~\ref{alg:SR1_da_PQN-step3}, we design a new correction strategy such that $\tilde G_k\succeq J_k$ (discussed in detail in Section~\ref{Section:conv}), where $J_k$ is defined in Step~\ref{alg:SR1_da_PQN-step4}. It is crucial to observe that this is only a theoretical quantity for the analysis of the algorithm. In practice, $J_k$ is only evaluated in the product with $u_k=x_\kp-x_k$ for which it simplifies to $J_ku_k=\nabla f(x_\kp)-\nabla f(x_k)$. Finally, Step~\ref{alg:SR1_da_PQN-step5} computes the SR1 update of the metric from \eqref{eq:gen-SR1-update}, which for $(\tilde G_k-J_k)u_k\neq 0$ (namely $F^\prime(x_\kp)\neq 0$), becomes $$ G_\kp = \text{SR1}(J_k,\tilde G_k, u_k) = G_k+\lambda_k\opid - \frac{\left((G_k+\lambda_k\opid)u_k-y_k\right)\left( (G_k+\lambda_k\opid)u_k-y_k\right)^\top}{u_k^\top( (G_k+\lambda_k\opid)u_k -y_k)}\,,$$
where $y_k:=\nabla f(x_\kp)-\nabla f(x_k)$.
\begin{algorithm}[H]
\caption{Cubic SR1 PQN}\label{Alg:SR1_da_PQN}
\begin{algorithmic}
\Require $x_0$, $G_0=L\opid$, $r_{-1}=0$.
\State \textbf{Update for $k = 0,\cdots,N$}:
\begin{enumerate}[1.]
\item\label{alg:SR1_da_PQN-step1} \textbf{Update} \begin{equation}\label{alg:update}
    x_\kp = \argmin_{x\in\R^n}\; \{ g(x)+ \scal{\nabla f(x_k)}{x-x_k}+\frac{1}{2}\norm[G_k+ L_Hr_\km\opid]{x-x_k}^2+{\frac{L_H}{3}\norm[]{x-x_k}^3}\}.
\end{equation}
    \item\label{alg:SR1_da_PQN-step2} \textbf{Set} $u_k=x_\kp-x_k$, $r_k=\norm[]{u_k}$, $\lambda_k= L_Hr_\km+L_Hr_k$. 
    \item\label{alg:SR1_da_PQN-step3} \textbf{Compute} $\tilde G_k=G_k+\lambda_k\opid$ and $$F^\prime(x_\kp)=\nabla f(x_\kp)-\nabla f(x_k)-\tilde G_{k}u_k\,.$$
    \item\label{alg:SR1_da_PQN-step4} \textbf{Denote but not use explicitly} $J_k\coloneqq\int_0^1\nabla^2f(x_k+tu_k)dt$.
    \item\label{alg:SR1_da_PQN-step5} \textbf{Compute} $G_\kp=\text{SR1}( J_k,\tilde G_k,u_k)$. 
    \item\label{alg:SR1_da_PQN-step6} \textbf{If $\norm{F^\prime(x_\kp)}=0$}:
    Terminate.
\end{enumerate}
\State \textbf{End}
\end{algorithmic}
\end{algorithm}
For the cubic regularized proximal quasi-Newton method in Algorithm~\ref{Alg:SR1_da_PQN}, we obtain the following global convergence with a non-asymptotic super-linear rate. 
\begin{theorem}[Main Theorem for Algorithm~\ref{Alg:SR1_da_PQN}]\label{thm:main-cubic}
For any initialization $x_0\in\R^n$ and any $N\in\N$, \textup{Cubic SR1 PQN}(Algorithm~\ref{Alg:SR1_da_PQN}) has a global convergence with the rate:
\begin{equation}\label{ineq1:super-linearrate}
    \norm[]{F^\prime (x_N)}\leq \left(\frac{C}{N^{1/2}}\right)^{N/2}\norm[]{F^\prime (x_0)} \,,
\end{equation}
where $C\coloneqq \left(2nL_HC_0 + nL\right)/\mu$ and $C_0\coloneqq \sqrt{\frac{ 2(F(x_0) -\inf F)}{\mu}}$.
\end{theorem}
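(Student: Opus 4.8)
The plan is to track the quantity $\norm{F'(x_{k+1})} = \norm{\nabla f(x_{k+1}) - \nabla f(x_k) - \tilde G_k u_k}$ and to show that each step contracts it by a factor controlled by $1/\sqrt{k}$. The starting point is the identity $\nabla f(x_{k+1}) - \nabla f(x_k) = J_k u_k$, so that $F'(x_{k+1}) = (J_k - \tilde G_k) u_k = -(\tilde G_k - J_k)u_k$ and $\norm{F'(x_{k+1})} = \norm{(\tilde G_k - J_k)u_k}$. The key structural facts I would invoke are: (i) $\tilde G_k = G_k + \lambda_k \opid \succeq J_k$ by the correction strategy in Step~\ref{alg:SR1_da_PQN-step3} together with $L_H$-Lipschitz continuity of the Hessian (which bounds $\norm{J_k - \nabla^2 f(x_k)} \le \tfrac{L_H}{2} r_k$ and similar), so the SR1 update from \eqref{eq:gen-SR1-update} is well-defined and keeps $G_{k+1} \succeq J_k \succeq \mu \opid$; (ii) the SR1 update, being a rank-one downdate that annihilates $u_k$, satisfies $\trace G_{k+1} = \trace \tilde G_k - \frac{\norm{(\tilde G_k - J_k)u_k}^2}{u_k^\top (\tilde G_k - J_k) u_k} = \trace G_k + n\lambda_k - \frac{\norm{F'(x_{k+1})}^2}{u_k^\top(\tilde G_k - J_k)u_k}$.

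From (ii) I would rearrange to isolate $\norm{F'(x_{k+1})}^2$:
\begin{equation*}
    \norm{F'(x_{k+1})}^2 = \bigl(u_k^\top(\tilde G_k - J_k)u_k\bigr)\bigl(\trace G_k + n\lambda_k - \trace G_{k+1}\bigr) \le \norm{(\tilde G_k - J_k)u_k}\,r_k\,\bigl(\trace G_k + n\lambda_k - \trace G_{k+1}\bigr),
\end{equation*}
using $u_k^\top(\tilde G_k - J_k)u_k \le r_k \norm{(\tilde G_k - J_k) u_k} = r_k\norm{F'(x_{k+1})}$ (Cauchy–Schwarz, positive semidefiniteness). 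Cancelling one factor of $\norm{F'(x_{k+1})}$ gives the one-step bound $\norm{F'(x_{k+1})} \le r_k\bigl(\trace G_k + n\lambda_k - \trace G_{k+1}\bigr)$. Next I would telescope the potential $V(G_k) = \trace G_k$: summing $\trace G_{k+1} \le \trace G_k + n\lambda_k$ and using $\lambda_k = L_H r_{k-1} + L_H r_k$, the accumulated increase is controlled by $\sum_k r_k$. To bound $\sum_k r_k$ and each individual $r_k$, I would use the cubic-model descent inequality coming from Step~\ref{alg:SR1_da_PQN-step1} (the standard Nesterov–Polyak argument, adapted to the composite setting and the extra $G_k + L_H r_{k-1}\opid$ term): this yields $F(x_k) - F(x_{k+1}) \gtrsim \tfrac{L_H}{\text{const}} r_k^3$ or an analogous quadratic-in-$r_k$ bound, hence $\sum_k r_k^3 \le \text{const}\,(F(x_0) - \inf F)$, and by strong convexity $r_k \le \norm{x_k - x_*} + \norm{x_{k+1} - x_*} \le C_0 = \sqrt{2(F(x_0)-\inf F)/\mu}$ (after bounding $F(x_k) - \inf F \le F(x_0) - \inf F$ from monotone descent). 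Combining, $\trace G_N \le \trace G_0 + nL_H \sum_{k<N} 2 r_k \le nL + 2nL_H N C_0$, which after dividing by the eigenvalue lower bound $\mu$ produces exactly the constant $C = (2nL_H C_0 + nL)/\mu$.

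The final assembly: from $\norm{F'(x_{k+1})} \le r_k(\trace G_k + n\lambda_k - \trace G_{k+1})$ I would further bound $r_k$ by using that $\norm{F'(x_{k+1})}$ itself is comparable, via strong convexity of the model and the implicit optimality condition in Step~\ref{alg:SR1_da_PQN-step1}, to a multiple of $r_k$ (roughly $\norm{F'(x_{k+1})} \le (\text{stuff})\, r_k$ where "stuff" is bounded by $\trace G_k/\mu + \dots$, and conversely a descent-type lower bound links $r_k$ to the previous residual). Chaining these to get $\norm{F'(x_{k+1})} \le \frac{c_k}{?}\norm{F'(x_k)}$ with $\prod_{k<N} c_k \le (\trace G_N/\mu)^{N}/(\text{something growing like }N^{N/2})$, and invoking the just-derived $\trace G_N \le \mu C$ linear-in-$N$ bound, gives $\norm{F'(x_N)} \le (C/\sqrt N)^{N/2}\norm{F'(x_0)}$ after an arithmetic-mean/geometric-mean step on the product of the per-iteration factors. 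I expect the main obstacle to be the bookkeeping that converts the telescoped trace bound and the cubic descent estimate into a clean per-iteration contraction $\norm{F'(x_{k+1})} \le (\cdots)\norm{F'(x_k)}$ — in particular getting the exponent $N/2$ and the $N^{1/2}$ in the denominator to come out exactly, which requires carefully pairing the $r_k$ factors (one from the SR1 trace-drop bound, one from relating $r_k$ to the residual) and applying AM–GM to $\sum_k(\trace G_k - \trace G_{k+1} + n\lambda_k)$ over the $N$ iterations rather than bounding term by term.
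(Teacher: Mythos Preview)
Your structural skeleton (trace potential $V(G)=\trace G$, the SR1 trace-drop identity, telescoping, AM--GM at the end) matches the paper, but two concrete steps fail, and they are exactly where the $N^{-1/2}$ comes from. First, your bound $\sum_{k<N}r_k\le NC_0$ (from $r_k\le C_0$ individually) is too weak: with a linear-in-$N$ bound on $\sum_k\lambda_k$, the AM--GM step yields only a constant per-iteration factor and no super-linear rate follows (your sentence ``$\trace G_N\le\mu C$ linear-in-$N$ bound'' is internally inconsistent, since $\mu C$ is a fixed constant). The paper does not use the Nesterov--Polyak cubic descent here; instead, strong convexity of $f$, the relation $G_k\succeq J_{k-1}$, and the optimality condition combine to give $F(x_{k+1})-F(x_k)\le-\tfrac{\mu}{2}r_k^2$ (Lemma~\ref{lemma:fdescent}), hence $\sum_k r_k^2\le C_0^2$ and, by Cauchy--Schwarz, $\sum_{k<N}r_k\le C_0\sqrt N$. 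That $\sqrt N$ is precisely what survives into the rate.

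Second, your ``final assembly'' never produces a clean inequality linking $g_{k+1}/g_k$ to summable quantities; the hoped-for ``descent-type lower bound linking $r_k$ to the previous residual'' is left unspecified. The paper bypasses your Cauchy--Schwarz step $u_k^\top(\tilde G_k-J_k)u_k\le r_k\|F'(x_{k+1})\|$ altogether: it bounds the denominator in $\nu(J_k,\tilde G_k,u_k)$ by $u_k^\top\tilde G_ku_k$ and proves, via monotonicity of $\partial g$ and the optimality condition (Lemma~\ref{ccubic:ukGu}), that $u_k^\top\tilde G_ku_k\le\tfrac{1}{\mu}\|F'(x_k)\|^2$. This yields directly $V(\tilde G_k)-V(\tilde G_{k+1})\ge\mu\,g_{k+1}^2/g_k^2-n\lambda_{k+1}$, so telescoping gives $\sum_{k<N} g_{k+1}^2/g_k^2\le C\sqrt N$, and AM--GM on the \emph{squared} ratios delivers the exponent $N/2$ and the denominator $N^{1/2}$ in one stroke---no separate pairing of $r_k$ factors is needed.
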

\begin{proof}
    See Section~\ref{conv:alg1}.
\end{proof}
\begin{remark}\label{rem:cubic-main-rate}
    In fact, \eqref{ineq1:super-linearrate} indicates that the super-linear rate is only attained after a certain number of iterations $\bar N$ such that $\bar N\geq C^2$, where $C=O(\frac{n}{\mu^{3/2}})$.  $\bar N$ is comparable to $O(n^2)$ which is an important price to pay for globality. However, even for an initialization close to the optimal point, the super-linear rate of \cite{rodomanov2021greedy,ye2023towards} is attained after $\bar N=O(n)$ iterations. While the super-linear rate of \cite{jin2023non} is attained immediately, the radius of their local region of convergence is as small as $O(1/\sqrt{n})$ when $n$ is large. Besides, the local region criterion of \cite{jin2023non} requires the knowledge of the optimal point $x_*$, making this criterion useless in practice.
    
\end{remark}
\begin{remark}\label{rem:cubic-main-subgrad}
Following the optimality condition in \eqref{optimality condtion:cubic}, it turns out that $F^\prime (x)$ is a subgradient of the possibly non-smooth function $F$ at $x$. In turn, $\norm[]{F^\prime(x_N)}$ which goes to zero at a superlinear rate according to Theorem~\ref{thm:main-cubic}, quantifies how far $x_N$ is from optimality. 
\end{remark}

When $g\equiv 0$, Algorithm~\ref{Alg:SR1_da_PQN} is a cubic regularized SR1 quasi-Newton method for solving a smooth optimization problem\textcolor{black}{, and is also a novel contribution}. In this case, the global convergence rate in \eqref{ineq1:super-linearrate} can be stated explicitly in term of the gradient of the (smooth) objective
\[
    \norm[]{\nabla f (x_N)}\leq \left(\frac{C}{N^{1/2}}\right)^{N/2}\norm[]{\nabla f (x_0)} \,.
\]
Despite having the favorable rate of convergence, the computational complexity per iteration can be costly, since the cubic regularized subproblem that needs to be minimized in Step~\ref{alg:SR1_da_PQN-step3} usually does not have a closed form solution. \textcolor{black}{While the same is true also for cubic regularized Newton's method \cite{nesterov2006cubic}, the additional cost casts our algorithm inpractical for large-scale applications. In our implementation, we have adopted the strategy from \cite[Section 5]{nesterov2006cubic}, which proposes a reformulation into a convex one-dimensional subproblem and the usage of a bisectioning method. To remedy the high computational cost caused by the cubic regularization term}, inspired by \cite{mishchenko2023regularized,doikov2024gradient}, we provide two proximal quasi-Newton methods with gradient regularization (Algorithm~\ref{Alg:SR1_da_PQN2} and~\ref{Alg:SR1_da_PQN3}), which come with the same theoretical guarantees while maintaining a low computational cost per iteration.\\

\textbf{Algorithm~\ref{Alg:SR1_da_PQN2}} is derived from Algorithm~\ref{Alg:SR1_da_PQN}. Step~\ref{alg:SR1_da_PQN2-step1} of Algorithm~\ref{Alg:SR1_da_PQN2} equips the classical quasi-Newton update with a gradient regularization term instead of a cubic regularization. Step~\ref{alg:SR1_da_PQN2-step2} defines $J_k$ which has the same exclusively theoretical purpose as in Algorithm~\ref{Alg:SR1_da_PQN}. In Step~\ref{alg:SR1_da_PQN2-step3}, we define several variables, including $F^\prime(x_\kp)$ which is used to generate \textcolor{black}{the correction factor} $\lambda_\kp$ and terminate the algorithm in Step~\ref{alg:SR1_da_PQN2-step5}. Step~\ref{alg:SR1_da_PQN2-step4} is a new correction strategy. First, we compute $\hat G_\kp$ with a scaling parameter $1+\lambda_\kp$. Second, we set a restarting criterion based on the trace of $\hat G_\kp$. If the trace of $\hat G_\kp$ is bounded by $n\bar\kappa$ where $\bar\kappa\geq L$ is a hyper-parameter set at the beginning, we update the gradient regularized quasi-Newton metric $\tilde G_\kp$, otherwise, we restart the update of the quasi-Newton metric by setting $\tilde G_\kp=L\opid$.
\begin{algorithm}[H]
\caption{Grad SR1 PQN}\label{Alg:SR1_da_PQN2}
\begin{algorithmic}
\Require $x_0$, $G_0=L\opid$, $\lambda_0=0$, $\tilde G_0=G_0(1+\lambda_0)$, $\bar \kappa\geq L$\,. 
\State \textbf{Update for $k = 0,\cdots,N$}:
\begin{enumerate}[1.]
\item \label{alg:SR1_da_PQN2-step1}\textbf{Update} \begin{equation}\label{alg2:update}
    x_\kp =\argmin_{x\in\R^n} g(x) + \scal{\nabla f(x_k)}{x-x_k}+\frac{1}{2}\norm[\tilde G_k]{x-x_k}^2\,.
\end{equation}
    \item \label{alg:SR1_da_PQN2-step2}\textbf{Denote but not use explicitly} $J_k\coloneqq\int_0^1\nabla^2f(x_k+tu_k)dt$\,.
    \item \label{alg:SR1_da_PQN2-step3}\textbf{Compute} $u_k=x_\kp-x_k$, $r_k=\norm[]{u_k}$, and  
    \[
    \begin{split}
        F^\prime(x_\kp)=&\ \nabla f(x_\kp)-\nabla f(x_k)-\tilde G_{k}u_k\,,\\ 
        G_\kp =&\ \text{SR1}(J_k,\tilde G_k,u_k)\,,\\
        \lambda_\kp=&\ \frac{1}{\mu}\left(\sqrt{L_H\norm[]{F^\prime (x_\kp)}}+L_Hr_{k}\right)\,.
    \end{split}
    \]
    \item \label{alg:SR1_da_PQN2-step4}\textbf{Update $\tilde G_\kp$:} compute $\hat G_\kp= (1+\lambda_\kp)G_\kp$ (\textbf{Correction step}).
    \begin{enumerate}[]
        \item \textbf{If $\trace  \hat G_\kp\leq n\bar\kappa$:} we set $\tilde G_\kp= \hat G_\kp$\,.
        \item \textbf{Othewise:} we set $\tilde G_\kp= L\opid$ (\textbf{Restart step}).
    \end{enumerate}
    \item \label{alg:SR1_da_PQN2-step5} \textbf{If $\norm{F^\prime(x_\kp)}=0$}:
    Terminate.
\end{enumerate}
\State \textbf{End}
\end{algorithmic}
\end{algorithm}
In  Algorithm~\ref{Alg:SR1_da_PQN2}, $\lambda_k$ relies on the subgradient $F^\prime(x_k)$ of $F$ at $x_k$ which is different from what we did in Algorithm~\ref{Alg:SR1_da_PQN} and makes \eqref{alg2:update} easier to solve compared to the cubic regularization. The gradient regularization, despite being inspired by \cite{mishchenko2023regularized}, is tailored to the non-smooth composite problem. 
 The restarting strategy in Step~\ref{alg:SR1_da_PQN2-step4} from Algorithm~\ref{Alg:SR1_da_PQN2}, respectively, ensures that for any $k\geq 1$, the metric $\tilde G_k$ is bounded, namely, $\tilde G_k\preceq \trace\tilde G_k\opid\preceq n\bar\kappa \opid$ where $\bar\kappa \geq L$.

For  gradient regularized proximal quasi-Newton methods, we obtain global convergence with the following non-asymptotic super-linear rate.
\begin{theorem}[Main Theorem for Algorithm~\ref{Alg:SR1_da_PQN2}]\label{thm:main-grad}
For any initialization $x_0\in\R^n$ and any $N\in\N$, \textup{Grad SR1 PQN} (Algorithm~\ref{Alg:SR1_da_PQN2}) has a global convergence with the rate:
\begin{equation}\label{ineq2:super-linearrate}
    \norm[]{F^\prime (x_N)}\leq \left(\frac{C_{\mathrm{grad}}}{N^{1/4}}\right)^{N/2}\norm[]{F^\prime (x_0)} \,,
\end{equation}
where $C_{\mathrm{grad}}\coloneqq (n\bar\kappa\Theta + nL)/\mu$, $\Theta = \frac{1}{\mu}\left(L_HC_0 +\sqrt{L_H(L+n\bar\kappa)C_0}\right)$ and $C_0\coloneqq \sqrt{\frac{ 2(F(x_0) -\inf F)}{\mu}}$.
\end{theorem}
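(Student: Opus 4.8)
The plan is to reduce Theorem~\ref{thm:main-grad} to a recursion for the quantity $\norm[]{F'(x_\kp)}$, exactly parallel to the strategy behind Theorem~\ref{thm:main-cubic}, but now controlling the regularization through $\lambda_\kp = \frac{1}{\mu}(\sqrt{L_H\norm[]{F'(x_\kp)}}+L_Hr_k)$ rather than a cubic term. First I would establish a one-step descent/progress estimate: using strong convexity of $f$, the optimality condition for the subproblem~\eqref{alg2:update}, and $L_H$-Lipschitz continuity of $\nabla^2 f$, I expect a bound of the form $\norm[]{F'(x_\kp)} \le \norm[]{(\tilde G_k - J_k)u_k} + \tfrac{L_H}{2}r_k^2$, with $J_k u_k = \nabla f(x_\kp)-\nabla f(x_k)$ by the integral form of the mean value theorem, so that $F'(x_\kp)$ is genuinely $(\tilde G_k - J_k)u_k$ up to a Hessian-Lipschitz error. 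Then I would use $\mu$-strong convexity to convert step lengths into subgradient norms, namely $r_k \le \frac{1}{\mu}\norm[]{F'(x_k)}$ (or the analogous bound coming from the prox-step optimality), which is what makes the final rate depend only on $\norm[]{F'(x_0)}$ and constants, not on the initialization.

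The second main ingredient is the SR1/trace analysis with the potential function $V(G)=\trace G$. Here I would show that the correction step $\hat G_\kp = (1+\lambda_\kp)G_\kp$ together with the SR1 update $G_\kp = \mathrm{SR1}(J_k,\tilde G_k,u_k)$ keeps $\tilde G_k \succeq J_k$ (the key invariant, since $J_k \preceq L\opid + \text{error} \preceq \tilde G_k$ after scaling), and that the trace is monotonically controlled: each SR1 step does not increase the trace relative to $\tilde G_k$ beyond a small additive amount governed by $\lambda_\kp$ and $L_H r_k$. The restart in Step~\ref{alg:SR1_da_PQN2-step4} guarantees $\tilde G_k \preceq n\bar\kappa\opid$ for all $k\ge 1$ unconditionally, so I would argue that the "bad'' iterations where a restart is triggered are harmless: at such an iteration $\tilde G_k = L\opid$, which is still a valid upper bound for $J_k$ up to the gradient-regularization slack, so the descent estimate still applies. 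Summing $\trace(\tilde G_k - J_k)u_k$-type quantities over $k$, using $\trace \tilde G_k \le n\bar\kappa$ and telescoping, should yield $\sum_{k<N}\norm[]{(\tilde G_k-J_k)u_k}^2 \lesssim n\bar\kappa \cdot (\text{total step length}) \lesssim n\bar\kappa\,C_0/\mu$ via the standard bound $\sum r_k^2 \le \frac{2(F(x_0)-\inf F)}{\mu}$, hence $C_0$ appears.

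Combining the two pieces: from the per-step bound $\norm[]{F'(x_\kp)} \le \norm[]{(\tilde G_k-J_k)u_k} + \tfrac{L_H}{2}r_k^2$, and the fact that the SR1 update forces $(\tilde G_\kp - J_\kp)u_\kp$ to be small whenever $\tilde G_k - J_k$ was already small along $u_k$ (the secant-correction property of SR1), I would derive a recursion $\norm[]{F'(x_\kp)} \le \frac{1}{N}\sum_{j\le k}\big(\text{something} \le C_{\mathrm{grad}}\big)\cdot\text{(geometric-type factor)}$; more precisely, the standard telescoping/averaging argument gives that at least half the iterations satisfy $\norm[]{F'(x_{k+1})}^2 \le \frac{2C_{\mathrm{grad}}^2}{N}\norm[]{F'(x_k)}^2$-style contraction, and chaining these yields $\norm[]{F'(x_N)} \le (C_{\mathrm{grad}}/N^{1/4})^{N/2}\norm[]{F'(x_0)}$ — the exponent $N^{1/4}$ (versus $N^{1/2}$ in the cubic case) arising because the gradient regularization $\lambda_\kp$ scales with $\sqrt{\norm[]{F'(x_\kp)}}$ rather than $\norm[]{F'(x_\kp)}$, weakening the contraction by a square root. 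I would then plug in $\Theta = \frac{1}{\mu}(L_H C_0 + \sqrt{L_H(L+n\bar\kappa)C_0})$ which is precisely the uniform bound on $\lambda_\kp$ obtained by feeding $\norm[]{F'(x_\kp)} \le (L+n\bar\kappa)r_k$ and $r_k \le C_0$ into its definition, and $C_{\mathrm{grad}} = (n\bar\kappa\Theta + nL)/\mu$ collects the trace bound $n\bar\kappa$ (scaled by $\Theta$) plus the $nL$ from the Hessian-error term, divided by $\mu$ from converting step lengths to gradient norms.

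The main obstacle I anticipate is handling the restart mechanism rigorously inside the telescoping sum: one must show that restarts cannot occur "too often,'' or alternatively that the analysis is robust to arbitrarily many restarts because after each restart $\tilde G_k = L\opid \succeq$ (roughly) $J_k$ still holds up to the gradient-regularization correction, so that the per-step inequality and the SR1 invariant $\tilde G_k \succeq J_k$ are never violated. Establishing the SR1 invariant $\tilde G_\kp \succeq J_\kp$ after a correction-and-possibly-restart step — which requires that the scaling $(1+\lambda_\kp)$ (or the reset to $L\opid$) compensates exactly for how far $J_\kp$ can exceed $\tilde G_k$ given an $L_H$-Lipschitz Hessian and step length $r_k$ — is the delicate coupling between the first-order progress estimate and the metric update, and is where the precise form of $\lambda_\kp$ in Step~\ref{alg:SR1_da_PQN2-step3} must be used in full.
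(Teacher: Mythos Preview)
Your proposal has the right cast of characters (the trace potential $V(G)=\trace G$, the SR1 invariant $\tilde G_k\succeq J_k$, the descent bound $\sum r_k^2\le C_0^2$, the restart mechanism), but the way you combine them misses the paper's central mechanism and, as written, would not close.

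First, your per-step estimate $\norm[]{F'(x_\kp)}\le\norm[]{(\tilde G_k-J_k)u_k}+\tfrac{L_H}{2}r_k^2$ is a misreading: by the definition of $J_k$ one has $J_ku_k=\nabla f(x_\kp)-\nabla f(x_k)$ exactly, hence $F'(x_\kp)=(J_k-\tilde G_k)u_k$ with no Hessian-Lipschitz remainder. More importantly, the paper does \emph{not} try to bound $\sum_k\norm[]{(\tilde G_k-J_k)u_k}^2$ and then extract a ``half the iterations contract'' argument. Instead, the key step (Lemma~\ref{lemma2:maindescent}) is that the SR1 trace drop is bounded below precisely by the \emph{ratio} of successive subgradient norms:
\[
V(\tilde G_k)-V(G_\kp)\ \ge\ \frac{\norm[]{(\tilde G_k-J_k)u_k}^2}{u_k^\top\tilde G_ku_k}\ =\ \frac{\norm[]{F'(x_\kp)}^2}{u_k^\top\tilde G_ku_k}\ \ge\ \frac{\mu\,\norm[]{F'(x_\kp)}^2}{\norm[]{F'(x_k)}^2}\,,
\]
the last inequality coming from $u_k^\top\tilde G_ku_k\le\frac{1}{\mu}\norm[]{F'(x_k)}^2$ (Lemma~\ref{grad:ukGu}, via monotonicity of $\partial g$). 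Telescoping then bounds $\sum_k\gamma_k^2$ with $\gamma_k^2=g_\kp^2/g_k^2$, and concavity of $\log$ (Jensen) converts $\frac{1}{N}\sum\gamma_k^2\le C_{\mathrm{grad}}/N^{1/4}$ into $\prod\gamma_k\le(C_{\mathrm{grad}}/N^{1/4})^{N/2}$. Your ``at least half the iterations satisfy a contraction'' heuristic is not how the multiplicative bound is obtained, and without the ratio structure you have no route from a sum bound to $g_N/g_0$.

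Second, your reading of $\Theta$ is off: it is not a uniform bound on $\lambda_\kp$ but the constant in the growth estimate $\sum_{k=1}^N\lambda_k\le\Theta N^{3/4}$ (Lemma~\ref{lemma2:growthrk}). The $N^{3/4}$ arises because $\norm[]{F'(x_k)}\le(L+n\bar\kappa)r_\km$ gives $\sum a_k^2\lesssim\sum r_\km\le C_0N^{1/2}$ for $a_k=\sqrt{L_H\norm[]{F'(x_k)}}$, and then Cauchy--Schwarz yields $\sum a_k\lesssim N^{3/4}$. This sum, not a uniform bound, is what feeds into the telescoped trace inequality. Finally, the restart is handled not by controlling its frequency but by a direct two-case estimate inside Lemma~\ref{lemma2:maindescent}: in either branch one has $V(G_\kp)-V(\tilde G_\kp)\ge -n\lambda_\kp\bar\kappa$, so restarts contribute uniformly to the same telescoping sum and need no separate counting.
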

\begin{proof}
    See Section~\ref{conv:alg2}.
\end{proof}
Remark~\ref{rem:cubic-main-subgrad} also applies for Theorem~\ref{thm:main-grad}. 
\begin{remark}
    In fact, \eqref{ineq2:super-linearrate} indicates that the super-linear rate is only attained after $N\geq C_{\mathrm{grad}}^4$ number of iterations, where $C_{\mathrm{grad}}=O(\frac{n}{\mu^{5/2}})$. 
\end{remark}
The rate for gradient regularization is worse than for cubic regularization, since the super-linear rate is guaranteed to occur only after $N\geq C_{\mathrm{grad}}^4$ number of iterations. However, since the cubic regularization is no longer needed, the computational cost of the update step is significantly reduced. When $g\equiv0$, the convergence rate in \eqref{ineq2:super-linearrate} becomes 
\[
  \norm[]{\nabla f(x_N)}\leq \left(\frac{C_{\mathrm{grad}}}{N^{1/4}}\right)^{N/2}\norm[]{\nabla f(x_0)} \,,
\]
and, in this smooth case, the update step in Step~\ref{alg:SR1_da_PQN2-step1} is as follows:
 \begin{equation}
    x_\kp = x_k - \tilde G_k^{-1}\nabla f(x_k)\,,
\end{equation}
where the computational cost is, in fact, $O(n^2)$, since the use of $\tilde G_k=(1+\lambda_k)G_k$ in Step~\ref{alg:SR1_da_PQN2-step4}  enables the application of the Sherman--Morrison--Woodbury formula.  

When $g\not\equiv 0$, the update step has the same computational cost as that of a classical proximal quasi-Newton method and is much more tractable than that of Algorithm~\ref{Alg:SR1_da_PQN} which uses cubic regularization.\\

\textbf{Algorithm~\ref{Alg:SR1_da_PQN3}.} For the sake of completeness, in Section~\ref{sec:grad-reg-second-alg} in the appendix, we present a variant of Algorithm~\ref{Alg:SR1_da_PQN2}, which comes with the same convergence guarantees under a slight modification of the correction step, which is closer to the existing related works (e.g. \cite{kanzow2022efficient}) and has a smaller constant $C_{\mathrm{grad}}=O(\frac{n}{\mu^{3/2}})$ which shows a better dependence on the conditioning, however the computational cost of the update step in the smooth case is larger than that of Algorithm~\ref{Alg:SR1_da_PQN2}.

\section{Convergence Analysis}\label{Section:conv}
In this section, without loss of generality, we assume that $\norm[]{F^\prime(x_k)}>0$ for any $k\in\N$, since, otherwise, our algorithms terminate after a finite number of steps and our non-asymptotic rates would hold until then. 

Recall that throughout the whole convergence analysis, we assume that Assumption~\ref{ass:main-problem} holds and that all variables are defined either as in Algorithm~\ref{Alg:SR1_da_PQN} or~\ref{Alg:SR1_da_PQN2}. The convergence analysis of Algorithm~\ref{Alg:SR1_da_PQN3} can also be found in the appendix.

\subsection{Preliminaries}

First, we need several important properties of  $J_k\coloneqq\int_0^1\nabla^2f(x_k+tu_k)dt$, which is the same for both algorithms. 
\begin{lemma}
    For each $k\in\N$, we have $J_k\preceq L\opid$.
\end{lemma}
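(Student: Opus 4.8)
The plan is to unpack the definition of $J_k$ and exploit the $L$-Lipschitz continuity of $\nabla f$, which is equivalent to the bound $\nabla^2 f(y) \preceq L\opid$ for all $y \in \R^n$ under Assumption~\ref{ass:main-problem}. Indeed, since $f$ is twice differentiable with $L$-Lipschitz gradient, a standard fact is that $0 \preceq \nabla^2 f(y) \preceq L\opid$ at every point $y$ (the lower bound comes from convexity, in fact $\mu$-strong convexity gives $\nabla^2 f(y) \succeq \mu\opid$, but only the upper bound is needed here).

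Concretely, I would fix an arbitrary $v \in \R^n$ and write
\[
v^\top J_k v = v^\top \left(\int_0^1 \nabla^2 f(x_k + t u_k)\,dt\right) v = \int_0^1 v^\top \nabla^2 f(x_k + t u_k)\, v \; dt\,.
\]
For each $t \in [0,1]$, the point $x_k + t u_k$ lies in $\R^n$, so the upper Hessian bound gives $v^\top \nabla^2 f(x_k + t u_k)\, v \le L \vnorm{v}^2$. Integrating this inequality over $t \in [0,1]$ yields $v^\top J_k v \le L \vnorm{v}^2 = v^\top (L\opid) v$. Since $v$ was arbitrary, this is exactly $J_k \preceq L\opid$ in the Loewner order.

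There is no real obstacle here; the only point that deserves a word is the justification that $\nabla^2 f(y) \preceq L\opid$ follows from $L$-Lipschitz continuity of $\nabla f$ together with twice differentiability — this is classical (e.g. via $v^\top \nabla^2 f(y) v = \lim_{s\to 0} \frac{1}{s} v^\top(\nabla f(y+sv) - \nabla f(y)) \le L\vnorm{v}^2$), so I would simply cite it or state it in one line. Interchanging the integral and the quadratic form is legitimate because the integrand $t \mapsto \nabla^2 f(x_k + t u_k)$ is continuous on the compact interval $[0,1]$. One could equally well phrase the argument at the level of matrices: $L\opid - \nabla^2 f(x_k + tu_k) \succeq 0$ for every $t$, and the integral of a pointwise positive semidefinite matrix-valued function is positive semidefinite, giving $L\opid - J_k \succeq 0$ directly.
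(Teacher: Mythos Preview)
Your proof is correct and follows exactly the same approach as the paper: use that $L$-Lipschitz continuity of $\nabla f$ gives $\nabla^2 f(y)\preceq L\opid$ pointwise, then pass this bound through the integral defining $J_k$. The paper's own proof is a two-line sketch of precisely this argument, so your version is just a more detailed expansion of it.
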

\begin{proof}
    Since $f$ has $L$-Lipschitz gradient, then, for any $x$, we have $\nabla^2 f(x)\preceq L\opid$. Therefore, due to the definition of $J_k$, we have $J_k\preceq L\opid$.  
\end{proof}
\begin{lemma}\label{lemma:JleqJ}
For each $k\in\N$, we have
    \begin{equation}
        \mu\opid\preceq J_k\preceq J_\km + L_H(r_k+r_\km)\opid\,.
    \end{equation}
\end{lemma}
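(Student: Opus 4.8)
The plan is to prove Lemma~\ref{lemma:JleqJ} by establishing the two inequalities separately. For the lower bound $\mu\opid\preceq J_k$, I would use that $f$ is $\mu$-strongly convex, hence $\nabla^2 f(x)\succeq \mu\opid$ for every $x$; integrating over $t\in[0,1]$ preserves the Loewner order, so $J_k = \int_0^1 \nabla^2 f(x_k+tu_k)\,dt \succeq \mu\opid$. This part is routine.

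For the upper bound, the key idea is to relate $J_k$ and $J_\km$ through the $L_H$-Lipschitz continuity of the Hessian. First I would observe that for any two points $x,y\in\R^n$, Lipschitz continuity of the Hessian gives $\nabla^2 f(x)\preceq \nabla^2 f(y) + L_H\norm[]{x-y}\opid$. Now for a fixed $t\in[0,1]$, I want to bound $\nabla^2 f(x_k+tu_k)$ by something involving $\nabla^2 f$ evaluated along the segment $[x_\km,x_k]$ (i.e. the segment whose integral defines $J_\km$). The natural approach: compare $\nabla^2 f(x_k+tu_k)$ with $\nabla^2 f(x_\km + s u_\km)$ for an appropriate $s$, or more cleanly, compare with $\nabla^2 f(x_k+tu_k)$ first reduced to $\nabla^2 f(x_k)$ and then relate $\nabla^2 f(x_k)$ to the whole integral $J_\km$. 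Since $u_k = x_\kp - x_k$ and $u_\km = x_k - x_{k-1}$, we have $\norm[]{(x_k+tu_k) - (x_\km + s u_\km)} \le \norm[]{x_k - x_\km} + t\norm[]{u_k} + (1-s)\norm[]{u_\km}$ — wait, more carefully $x_k - (x_\km + su_\km) = (1-s)u_\km$, so the distance is $\le t r_k + (1-s) r_\km$, which upon integrating $s$ over $[0,1]$ averages to $t r_k + \tfrac12 r_\km$. That would give a constant $\tfrac12 r_\km$ rather than $r_\km$, so presumably the intended route is cruder: bound $\nabla^2 f(x_k + t u_k)\preceq \nabla^2 f(x_\km + s u_\km) + L_H(r_k + r_\km)\opid$ using $t\le 1$, $1-s\le 1$, hence $t r_k + (1-s) r_\km \le r_k + r_\km$. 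Then integrate both sides over $t\in[0,1]$ and over $s\in[0,1]$: the left side becomes $J_k$, the right side becomes $J_\km + L_H(r_k+r_\km)\opid$, giving exactly the claimed bound.

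So the concrete steps are: (i) state the pointwise consequence of $L_H$-Lipschitz Hessian, $\norm[]{\nabla^2 f(x) - \nabla^2 f(y)}\le L_H\norm[]{x-y}$, hence $\nabla^2 f(x)\preceq \nabla^2 f(y) + L_H\norm[]{x-y}\opid$; (ii) for $t,s\in[0,1]$, estimate $\norm[]{(x_k+tu_k)-(x_{k-1}+su_{k-1})}\le t r_k + (1-s)r_{k-1}\le r_k + r_{k-1}$ using $x_k - x_{k-1} = u_{k-1}$; (iii) combine to get $\nabla^2 f(x_k+tu_k)\preceq \nabla^2 f(x_{k-1}+su_{k-1}) + L_H(r_k+r_{k-1})\opid$; (iv) integrate in $s$ over $[0,1]$ to replace the right-hand Hessian term by $J_{k-1}$; (v) integrate in $t$ over $[0,1]$ to replace the left-hand term by $J_k$, yielding $J_k\preceq J_{k-1} + L_H(r_k+r_{k-1})\opid$; (vi) append the lower bound from strong convexity.

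I do not anticipate a serious obstacle here — the only mild subtlety is being careful that integrating a Loewner inequality over a parameter preserves it (true because $u^\top(\cdot)u$ is linear and monotone integration of scalars is order-preserving), and choosing the triangle-inequality estimate loosely enough ($t\le 1$, $1-s\le 1$) to land on the stated constant $L_H(r_k+r_{k-1})$ rather than a sharper one. If one instead wanted the sharper $L_H(r_k + \tfrac12 r_{k-1})$ one could, but the lemma as stated only needs the coarser bound, so the proof should be short.
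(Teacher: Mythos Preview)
Your proposal is correct and follows essentially the same idea as the paper: use $L_H$-Lipschitz continuity of the Hessian pointwise, then integrate over the parametrizations of the two segments. The only cosmetic difference is that the paper uses a single parameter $t$ for both segments (writing $J_\km=\int_0^1\nabla^2 f(x_k-(1-t)u_\km)\,dt$ and comparing $\nabla^2 f(x_k+tu_k)$ directly with $\nabla^2 f(x_k-(1-t)u_\km)$), whereas you introduce separate parameters $t$ and $s$ and integrate twice; both routes land on the same crude bound $L_H(r_k+r_\km)$.
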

\begin{proof} 
Since $f(x)$ is $\mu$-strongly convex, the left first is trivial. Thus, we focus on the second inequality. The proof is similar with the one of \cite[Lemma 4.2]{rodomanov2021greedy} and \cite[Lemma 5]{ye2023towards}.  The assumption that the Hessian of $f$ is $L_H$-Lipschitz continuous means that there exists $L_H>0$ such that for any $x,y\in\R^n$, we have
\begin{equation}
    \norm[]{\nabla^2 f(x) - \nabla^2 f(y)}\leq L_H\norm[]{x-y}\,.
\end{equation}
For any $t\in[0,1]$ and $k\in\N$, letting $x=x_k+t u_k$ and $y=x_k-(1-t)u_\km$, we have that 
\begin{equation}\label{ineq:hessian}
    \norm[]{\nabla^2 f(x_k+tu_k) -\nabla^2 f(x_k-(1-t)u_k)}\leq L_H\norm[]{tu_k+(1-t)u_\km} \,.
    \end{equation}
Then, by the definition of $J_k$, we have
    \begin{equation}
        \begin{split}
            J_k-J_\km&=\int_0^1\nabla^2f(x_k+tu_k)dt-\int_0^1\nabla^2f(x_\km+tu_\km)dt\\
            &=\int_0^1\nabla^2f(x_k+tu_k)dt-\int_0^1\nabla^2f(x_k-(1-t)u_\km)dt\\
            &\preceq \int_0^1\norm[]{ \nabla^2f(x_k+tu_k)-\nabla^2f(x_k-(1-t)u_\km)}dt \cdot\opid\\
            &\preceq\int_0^1L_H\norm[]{tu_k+(1-t)u_\km}dt\cdot\opid\\
            &\preceq L_H(r_k+r_\km)\opid\,,
        \end{split}
    \end{equation}
    where the first inequality holds because of the definition of  the induced matrix norm, the second inequality holds due to \eqref{ineq:hessian} and the last one uses the triangle inequality.
\end{proof}
Then, we recall an important property of SR1 method, which shows that the update of SR1 method preserves the partial order of matrices.
\begin{lemma}\label{lemma:Anton}
    For any two symmetric positive definite matrices $A\in\R^{n\times n}$ and $G\in\R^{n\times n}$ with $0\preceq A\preceq G$ and any $u\in\R^n$, we observe the following for $G_+=\text{SR1}(A,G,u)$:
    \begin{equation}
        A\preceq G_+\preceq G\,.
    \end{equation}
\end{lemma}
\begin{proof}
    We recall the short proof from \cite[Part of Lemma 2.2]{rodomanov2021greedy} here for readers' convenience. If $(G-A)u=0$, the result is obvious. If $(G-A)u\neq 0$,  we conclude the statement as follows:
    \begin{equation}
        G_+-A = G-A - \frac{(G-A)uu^\top(G-A)}{u^\top (G-A)u} = (G-A)^{1/2}(1-\frac{\tilde u\tilde u^\top}{\tilde u^\top \tilde u})(G-A)^{1/2}\succeq 0\,,
    \end{equation}
    where $\tilde u = (G-A)^{1/2}u$.
\end{proof}
As motivated in the introduction, for any symmetric positive definite matrix $G$, we introduce a new potential function that is key to our convergence analysis:
\begin{equation}
    V(G)\coloneqq \trace G\,.
\end{equation}
Moreover, for any $G\succeq 0$ with $V(G)\geq 0$, given two matrices $A$ and $G$ satisfying $A\preceq G$, additionally, we introduce the following measurement function:
\begin{equation}
    \nu(A, G,u) = \begin{cases}
        0\,,&\ \text{if $(G-A)u=0$}\,;\\
        \frac{u^\top( G-A)(G-A)u}{u^\top (G-A) u}\,,&\ \text{otherwise}\,.
    \end{cases}
    \end{equation}
Using the potential function $V(G)$ and $\nu(A,G,u)$, we deduce the following lemma showing that the SR1 update leads to a better approximation of $A$.
\begin{lemma}\label{lemma:potentialfunction}
Consider two symmetric positive definite matrices $A\in\R^{n\times n}$ and $ G\in\R^{n\times n}$ with $0\prec A\preceq  G$ and any $u\in\R^n$. Let $G_+=\text{SR1}(A,  G, u)$. Then, we have 
 \begin{equation}
    V(G) - V(G_+)=\nu(A, G,u)\,,
    \end{equation}
and 
\begin{equation}
    \nu(A, G,u) \geq  \frac{\norm[]{(G-A)u}^2}{u^\top G u} \,.\\
    \end{equation}
\end{lemma}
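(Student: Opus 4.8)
The plan is to prove the two assertions of Lemma~\ref{lemma:potentialfunction} by direct computation, splitting into the trivial case $(G-A)u = 0$ and the generic case. When $(G-A)u = 0$, the SR1 update gives $G_+ = G$, so $V(G) - V(G_+) = 0 = \nu(A,G,u)$, and the second inequality reads $0 \geq 0$, since $(G-A)u = 0$ forces $\norm[]{(G-A)u}^2 = 0$. So the content is entirely in the case $(G-A)u \neq 0$.

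For the first identity in this case, I would start from the explicit SR1 formula $G_+ = G - \frac{(G-A)uu^\top(G-A)}{u^\top(G-A)u}$ and take the trace of both sides. Using linearity of the trace and the cyclic property $\trace(vv^\top) = v^\top v$ (or $\trace(vw^\top) = w^\top v$) applied to $v = (G-A)u$, the rank-one correction term has trace $\frac{u^\top(G-A)(G-A)u}{u^\top(G-A)u}$, which is exactly $\nu(A,G,u)$. Hence $V(G) - V(G_+) = \trace G - \trace G_+ = \nu(A,G,u)$. Note this requires $u^\top(G-A)u > 0$, which holds because $G - A \succeq 0$ and $(G-A)u \neq 0$ imply $u^\top(G-A)u = \norm[]{(G-A)^{1/2}u}^2 > 0$; this is worth stating explicitly so the denominator is legitimate.

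For the second inequality, I would write $\nu(A,G,u) = \frac{\norm[]{(G-A)u}^2}{u^\top(G-A)u}$ and compare denominators: since $A \succeq 0$, we have $u^\top A u \geq 0$, hence $u^\top(G-A)u = u^\top G u - u^\top A u \leq u^\top G u$. Because the numerator $\norm[]{(G-A)u}^2 \geq 0$ is fixed and the denominator only decreases when replaced by $u^\top(G-A)u \leq u^\top G u$, we get $\nu(A,G,u) = \frac{\norm[]{(G-A)u}^2}{u^\top(G-A)u} \geq \frac{\norm[]{(G-A)u}^2}{u^\top G u}$. One should check $u^\top G u > 0$, which holds since $G \succ 0$ and $u \neq 0$ (indeed $u = 0$ would give $(G-A)u = 0$, excluded).

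I do not anticipate a genuine obstacle here; the lemma is an elementary algebraic fact about the rank-one SR1 correction. The only points requiring a little care are the positivity of the two denominators $u^\top(G-A)u$ and $u^\top G u$ in the generic case (both follow from $(G-A)u \neq 0$ together with $0 \prec A \preceq G$), and making sure the trace identity $\trace\big((G-A)uu^\top(G-A)\big) = u^\top(G-A)(G-A)u$ is invoked cleanly via the cyclic property. Everything else is bookkeeping.
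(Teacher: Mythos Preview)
Your proposal is correct and follows essentially the same approach as the paper: handle the trivial case $(G-A)u=0$ separately, compute the trace of the rank-one correction to obtain $\nu(A,G,u)$, and derive the inequality by comparing denominators via $G-A\preceq G$ (equivalently $A\succeq 0$). Your additional care with the positivity of $u^\top(G-A)u$ and $u^\top G u$ is slightly more explicit than the paper, but the argument is the same.
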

\begin{proof}
The result is trivial for the case $(G-A)u=0$, since this yields $G_+=G$. Therefore, let $(G-A)u\neq 0$. A simple calculation shows 
\[
        V(G)-V(G_+)=\trace(G-G_+)= \trace\left(\frac{ (G-A)uu^\top(G- A)^\top }{u^\top(G-A)u}\right)= \left( \frac{\norm[]{(G-A)u}^2}{u^\top(G-A)u} \right)\,,
\]
in which the last term can be bounded from below as claimed thanks to the ordering $G\succeq A\succ 0$.


\end{proof}

\subsection{Convergence analysis of Algorithm~\ref{Alg:SR1_da_PQN}}\label{conv:alg1}

Let us focus on Algorithm~\ref{Alg:SR1_da_PQN}. 
The optimality condition of the update step in \eqref{alg:update} is
\begin{equation}\label{optimalitycond}
    \partial g(x_\kp)+\nabla f(x_k) + (G_k+ L_Hr_\km)(x_\kp-x_k) + L_H\norm[]{x_\kp-x_k}(x_\kp-x_k)\ni0\,,
\end{equation}
which, using our notations, simplifies to 
\begin{equation}\label{optimality condtion:cubic}
    \partial g(x_\kp)+ \nabla f(x_k) + ( G_k+\lambda_k)(x_\kp-x_k)\ni0\,.
\end{equation}
We denote $v_\kp \coloneqq -  \nabla f(x_k) - (G_k+\lambda_k)(x_\kp-x_k)$ and  $F^\prime(x_k)\coloneqq v_k+\nabla f(x_k)$ and obtain $F^\prime(x_k)\in \partial F(x_k)$.\\

The following lemma is important for the correction step from $G_k$ to $\tilde G_k$, showing that for each $k$, we have $\tilde G_k\succeq J_k$.
\begin{lemma}\label{lemma:GgeqJ}
    For any $k\in\N$, we have that $\tilde G_k=G_k+L_H(r_k+r_\km)\opid\succeq G_\kp\succeq J_k$.
\end{lemma}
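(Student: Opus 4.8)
The plan is to prove the two inequalities in the chain $\tilde G_k = G_k + L_H(r_k + r_\km)\opid \succeq G_\kp \succeq J_k$ by a single induction on $k$, using the previously established facts: Lemma~\ref{lemma:JleqJ} (which controls how $J_k$ grows relative to $J_\km$), Lemma~\ref{lemma:Anton} (which says the SR1 update preserves the sandwich $A \preceq G_+ \preceq G$), and the structure of the algorithm's correction step. The key observation to set up the induction is that $\tilde G_k = G_k + \lambda_k\opid$ with $\lambda_k = L_H r_\km + L_H r_k$, so the first inequality $\tilde G_k \succeq G_\kp$ should follow directly once we know $G_\kp = \text{SR1}(J_k, \tilde G_k, u_k) \preceq \tilde G_k$, which is exactly the upper half of Lemma~\ref{lemma:Anton} applied with $A = J_k$ and $G = \tilde G_k$ — provided we can justify that the hypothesis $0 \prec J_k \preceq \tilde G_k$ of that lemma holds.

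So the real content is the lower bound $\tilde G_k \succeq J_k$ (equivalently $G_k + L_H(r_k + r_\km)\opid \succeq J_k$), which then also licenses applying Lemma~\ref{lemma:Anton} to get $G_\kp \succeq J_k$. I would prove $\tilde G_k \succeq J_k$ by induction. For the base case $k=0$: $\tilde G_0 = G_0 + L_H(r_0 + r_{-1})\opid = L\opid + L_H r_0 \opid$ (since $r_{-1} = 0$), and $J_0 \preceq L\opid \preceq \tilde G_0$ by the first preliminary lemma. For the inductive step, suppose $\tilde G_\km \succeq J_\km$. By Lemma~\ref{lemma:Anton} applied at step $k-1$ (with $A = J_\km$, $G = \tilde G_\km$), we get $G_k = \text{SR1}(J_\km, \tilde G_\km, u_\km) \succeq J_\km$. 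Then
\begin{equation*}
\tilde G_k = G_k + L_H(r_k + r_\km)\opid \succeq J_\km + L_H(r_k + r_\km)\opid \succeq J_k\,,
\end{equation*}
where the last inequality is precisely the second inequality of Lemma~\ref{lemma:JleqJ}. This also gives $J_k \succeq \mu\opid \succ 0$ from the left inequality of Lemma~\ref{lemma:JleqJ}, so the positive-definiteness hypotheses needed to invoke Lemma~\ref{lemma:Anton} at step $k$ are in place, closing the induction and simultaneously yielding $G_\kp = \text{SR1}(J_k, \tilde G_k, u_k) \succeq J_k$.

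The main subtlety — and the step I would be most careful about — is the bookkeeping of indices in the correction term and making sure $r_\km$ is well-defined and matches between Lemma~\ref{lemma:JleqJ} and the definition of $\lambda_k$ in Step~\ref{alg:SR1_da_PQN-step2}; in particular one needs $\lambda_k = L_H(r_k + r_\km)$ to line up exactly with the $L_H(r_k + r_\km)\opid$ appearing in Lemma~\ref{lemma:JleqJ}, which it does by construction. A second point worth stating explicitly is that the inductive hypothesis must be phrased as "$\tilde G_{k-1} \succeq J_{k-1}$ and both are positive definite" so that Lemma~\ref{lemma:Anton} is applicable at the previous step; the positivity of $J_k$ for all $k$ comes for free from $\mu$-strong convexity via Lemma~\ref{lemma:JleqJ}, so there is no circularity. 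No genuinely hard estimate is involved — the lemma is essentially an assembly of the three preceding lemmas in the right order, and the only thing that can go wrong is an off-by-one in the index of $r$.
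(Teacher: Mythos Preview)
Your proposal is correct and follows essentially the same approach as the paper: an induction on $k$ using Lemma~\ref{lemma:JleqJ} to pass from $J_\km$ to $J_k$ and Lemma~\ref{lemma:Anton} to sandwich the SR1 update. The only cosmetic difference is that the paper phrases the induction hypothesis directly as $J_\km \preceq G_k \preceq \tilde G_\km$, whereas you state it as $\tilde G_\km \succeq J_\km$ and then derive $G_k \succeq J_\km$ via Lemma~\ref{lemma:Anton}; the logic is identical.
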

\begin{proof}
We prove the result by induction. For $k=0$, by definition, we have $ J_0 \preceq  L\opid\preceq \tilde G_0$, and using Lemma~\ref{lemma:Anton}, we have $J_0\preceq G_1=\text{SR1}(J_0,\tilde G_0,u_0)\preceq \tilde G_0$. Now, we assume that $J_\km\preceq G_k\preceq \tilde G_\km$ holds for $k-1$. Then, for $k$, we have
    \begin{equation}
        J_k\preceq J_\km+L_H(r_\km+r_k)\opid\preceq G_k+ L_H(r_k+r_\km)\opid=\tilde G_k\,,
    \end{equation}
    where the first inequality holds due to Lemma~\ref{lemma:JleqJ}, which shows that $0\preceq J_k \preceq \tilde G_k$ and we conclude the induction by Lemma~\ref{lemma:Anton}:
    \[
      J_k\preceq G_\kp=\text{SR1}(J_k,\tilde G_k, u_k)\preceq \tilde G_k\,. \qedhere
    \]
\end{proof}
Now, we study the descent lemma of function values.
\begin{lemma}\label{lemma:fdescent}
For any $k\in\N$, we have
\begin{equation}
    F(x_\kp)-F(x_k)\leq -\frac{\mu}{2}r_k^2\,.
\end{equation}
\end{lemma}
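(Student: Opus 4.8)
The plan is to prove the descent inequality $F(x_{k+1}) - F(x_k) \leq -\tfrac{\mu}{2} r_k^2$ by exploiting the fact that $x_{k+1}$ is the exact minimizer of the strongly convex model in \eqref{alg:update}, and then comparing the model value to the true function value $F(x_{k+1})$ via the Lipschitz-Hessian bound. First I would write $m_k(x) := g(x) + \scal{\nabla f(x_k)}{x-x_k} + \tfrac12\norm[G_k + L_H r_{k-1}\opid]{x-x_k}^2 + \tfrac{L_H}{3}\norm[]{x-x_k}^3$ for the model minimized in Step~\ref{alg:SR1_da_PQN-step1}. Since $G_k \preceq \tilde G_{k-1} \preceq n\bar\kappa\opid$ — more simply, since $G_k$ is positive definite by Lemma~\ref{lemma:Anton}/\ref{lemma:GgeqJ} — the quadratic term is at least $\tfrac{\mu}{2}\norm[]{x-x_k}^2$ is \emph{not} quite what I want; rather, $m_k$ is strongly convex, so by the optimality of $x_{k+1}$ and strong convexity of $m_k$ with modulus at least that coming from, say, $L_H r_{k-1}$ plus the cubic term, one gets $m_k(x_{k+1}) \le m_k(x_k) - (\text{something})$. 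Actually the cleanest route: evaluate $m_k$ at $x_k$ to get $m_k(x_k) = g(x_k)$, and use $m_k(x_{k+1}) \le m_k(x_k) = g(x_k)$ since $x_{k+1}$ minimizes $m_k$.

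Next I would bound $F(x_{k+1}) = g(x_{k+1}) + f(x_{k+1})$ from above. By the standard consequence of $L_H$-Lipschitz continuity of $\nabla^2 f$, we have
\begin{equation*}
  f(x_{k+1}) \le f(x_k) + \scal{\nabla f(x_k)}{u_k} + \tfrac12\scal{\nabla^2 f(x_k)u_k}{u_k} + \tfrac{L_H}{6}\norm[]{u_k}^3\,,
\end{equation*}
and since $\nabla^2 f(x_k) \preceq L\opid$ — but more usefully, we want to relate $\nabla^2 f(x_k)$ to $G_k + L_H r_{k-1}\opid$. Here is where Lemma~\ref{lemma:GgeqJ} enters: it gives $\tilde G_{k-1} = G_{k-1} + L_H(r_{k-1}+r_{k-2})\opid \succeq G_k$, and the induction in its proof shows $J_{k-1} \preceq G_k$; combined with $\nabla^2 f(x_k) \preceq J_k + L_H r_k\opid$-type estimates (from $L_H$-Lipschitz Hessian: $\nabla^2 f(x_k) \preceq \nabla^2 f(x_k + t u_k) + L_H t r_k\opid$, integrate) and $J_{k-1} \preceq G_k$, one should arrive at $\nabla^2 f(x_k) \preceq G_k + L_H r_{k-1}\opid$. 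Granting that, $\tfrac12\scal{\nabla^2 f(x_k)u_k}{u_k} \le \tfrac12\norm[G_k + L_H r_{k-1}\opid]{u_k}^2$, so
\begin{equation*}
  f(x_{k+1}) \le f(x_k) + \scal{\nabla f(x_k)}{u_k} + \tfrac12\norm[G_k + L_H r_{k-1}\opid]{u_k}^2 + \tfrac{L_H}{6}r_k^3 \le m_k(x_{k+1}) - g(x_{k+1})\,,
\end{equation*}
where the last step uses $\tfrac{L_H}{6}r_k^3 \le \tfrac{L_H}{3}r_k^3$ and the definition of $m_k$ evaluated at $x_{k+1}$. Hence $F(x_{k+1}) \le m_k(x_{k+1}) \le g(x_k) = m_k(x_k)$, which gives $F(x_{k+1}) \le F(x_k)$ but not yet the $-\tfrac{\mu}{2}r_k^2$ gap.

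To recover the strictly negative term I would keep the strong-convexity slack. Since $f$ is $\mu$-strongly convex, sharpening the lower estimate: $f(x_{k+1}) \ge f(x_k) + \scal{\nabla f(x_k)}{u_k} + \tfrac{\mu}{2}r_k^2$ is the wrong direction; instead use that $m_k$ itself, having the term $\tfrac12\norm[G_k+L_H r_{k-1}\opid]{\cdot}^2$ with $G_k \succeq \mu\opid$ (which follows from $J_{k-1}\succeq\mu\opid$ by Lemma~\ref{lemma:JleqJ} and $G_k \succeq J_{k-1}$), is $\mu$-strongly convex, so $m_k(x_k) \ge m_k(x_{k+1}) + \tfrac{\mu}{2}\norm[]{x_{k+1}-x_k}^2 = m_k(x_{k+1}) + \tfrac{\mu}{2}r_k^2$ by the minimality of $x_{k+1}$. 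Chaining: $F(x_{k+1}) \le m_k(x_{k+1}) \le m_k(x_k) - \tfrac{\mu}{2}r_k^2 = g(x_k) - \tfrac{\mu}{2}r_k^2 \le F(x_k) - \tfrac{\mu}{2}r_k^2$, where the last inequality uses $g(x_k) \le g(x_k) + f(x_k) - \inf$ ... no: simply $g(x_k) = F(x_k) - f(x_k)$ and one needs $-f(x_k) \le 0$, which need not hold. The fix is to carry $f(x_k)$ through consistently: $F(x_{k+1}) = g(x_{k+1}) + f(x_{k+1}) \le [m_k(x_{k+1}) - g(x_{k+1})]_{f\text{-part}} + g(x_{k+1}) + \ldots$ — i.e. bound $f(x_{k+1})$ by the $f$-model which equals $m_k(x_{k+1}) - g(x_{k+1})$ plus $f(x_k)$ minus the linear/quadratic model, being careful that $m_k(x) - g(x)$ already contains $\scal{\nabla f(x_k)}{x-x_k}$ but not $f(x_k)$; so define $\tilde m_k(x) = f(x_k) + (m_k(x) - g(x))$ and redo the comparison $F(x_{k+1}) \le g(x_{k+1}) + \tilde m_k(x_{k+1}) = f(x_k) + m_k(x_{k+1}) \le f(x_k) + m_k(x_k) - \tfrac{\mu}{2}r_k^2 = f(x_k) + g(x_k) - \tfrac{\mu}{2}r_k^2 = F(x_k) - \tfrac{\mu}{2}r_k^2$.

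The main obstacle I anticipate is establishing the curvature domination $\nabla^2 f(x_k) \preceq G_k + L_H r_{k-1}\opid$ cleanly; this is exactly what the correction strategy and Lemma~\ref{lemma:GgeqJ} are designed to give ($G_k \succeq J_{k-1} \succeq \nabla^2 f$ evaluated appropriately, then absorb the Hessian-Lipschitz drift over $[x_{k-1}, x_{k+1}]$ into the $L_H r_{k-1}$ shift), and making the index bookkeeping on $r_{k-1}$ versus $r_k$ consistent. Everything else is the routine Taylor-with-$L_H$-Hessian expansion plus strong convexity of the cubic model.
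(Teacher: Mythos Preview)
Your argument, once you arrive at the clean chain
\[
F(x_{k+1}) \le f(x_k) + m_k(x_{k+1}) \le f(x_k) + m_k(x_k) - \tfrac{\mu}{2}r_k^2 = F(x_k) - \tfrac{\mu}{2}r_k^2,
\]
is correct. The curvature domination $\nabla^2 f(x_k)\preceq G_k + L_H r_{k-1}\opid$ you flag as the main obstacle is indeed easy: for $k\ge 1$ integrate $\nabla^2 f(x_k)-\nabla^2 f(x_{k-1}+t u_{k-1})\preceq L_H(1-t)r_{k-1}\opid$ over $t\in[0,1]$ to get $\nabla^2 f(x_k)\preceq J_{k-1}+\tfrac{L_H}{2}r_{k-1}\opid$, then use $J_{k-1}\preceq G_k$ from Lemma~\ref{lemma:GgeqJ}; for $k=0$ it is just $\nabla^2 f(x_0)\preceq L\opid=G_0$. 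The $\mu$-strong convexity of $m_k$ follows from $G_k\succeq J_{k-1}\succeq\mu\opid$ (and $G_0=L\opid\succeq\mu\opid$), together with convexity of $g$ and of the cubic term.

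However, this is a genuinely different route from the paper. The paper does \emph{not} use the Taylor expansion with cubic remainder nor the upper-model viewpoint. Instead it starts from strong convexity of $f$ in the form $f(x_{k+1})-f(x_k)\le\scal{\nabla f(x_{k+1})}{u_k}-\tfrac{\mu}{2}r_k^2$, rewrites $\nabla f(x_{k+1})-\nabla f(x_k)=J_k u_k$, applies Lemma~\ref{lemma:JleqJ} and Lemma~\ref{lemma:GgeqJ} to replace $\scal{J_k u_k}{u_k}$ by $\scal{(G_k+\lambda_k\opid)u_k}{u_k}$, adds the subgradient inequality $g(x_{k+1})-g(x_k)\le\scal{v_{k+1}}{u_k}$, and then the optimality condition $v_{k+1}+\nabla f(x_k)+(G_k+\lambda_k\opid)u_k=0$ makes every term except $-\tfrac{\mu}{2}r_k^2$ cancel \emph{exactly}. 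Your model-based argument is closer in spirit to the standard cubic-Newton analysis and is more modular, but it requires the extra pointwise estimate $\nabla^2 f(x_k)\preceq G_k+L_H r_{k-1}\opid$ and discards some slack (you bound $\tfrac{L_H}{6}r_k^3$ by $\tfrac{L_H}{3}r_k^3$); the paper's argument is tighter and shorter because the optimality condition is tailored to produce an exact cancellation.
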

\begin{proof} We start with the strong convexity of the smooth function $f$: for any $x$ and $y$, we have 
\begin{equation}
    f(x)-f(y)\leq \scal{\nabla f(x)}{x-y} - \frac{\mu}{2}\norm[]{x-y}^2\,.
\end{equation}
Letting $x=x_\kp$ and $y=x_k$, we have
    \begin{equation}\label{cubic decentlemma ineq:1}
        \begin{split}
           f(x_\kp)-f(x_k)&\leq \scal{\nabla f(x_\kp)}{x_\kp-x_k} - \frac{\mu}{2}\norm[]{x_\kp-x_k}^2\\
            &= \scal{\nabla f(x_k)}{x_\kp-x_k}+\scal{\nabla f(x_\kp)-\nabla f(x_k)}{x_\kp-x_k} - \frac{\mu}{2}\norm[]{x_\kp-x_k}^2\\
            &=\scal{\nabla f(x_k)}{x_\kp-x_k}+\scal{J_k(x_\kp-x_k)}{x_\kp-x_k} - \frac{\mu}{2}\norm[]{x_\kp-x_k}^2\,,
        \end{split}
    \end{equation}
    where the last equality holds since $J_ku_k=\int_0^1 \nabla^2 f(x_k+tu_k)u_kdt = \nabla f(x_\kp)-\nabla f(x_k)$.
    According to Lemma~\ref{lemma:JleqJ}, we have $\scal{J_ku_k}{u_k}\leq \scal{J_\km u_k+L_H(r_k+r_\km)u_k}{u_k} $. Thus, 
     \eqref{cubic decentlemma ineq:1} implies that
     \begin{equation}\label{cubic decentlemma ineq:2}
      \begin{split}
          f(x_\kp)-f(x_k)&\leq \scal{\nabla f(x_k)}{x_\kp-x_k}+\scal{J_\km(x_\kp-x_k)}{x_\kp-x_k}\\&+L_H(r_k+r_\km)\norm[]{x_\kp-x_k}^2- \frac{\mu}{2}\norm[]{x_\kp-x_k}^2\,.
      \end{split}
     \end{equation}
    By Lemma~\ref{lemma:GgeqJ}, we have $G_k\succeq J_\km$ and thus $\scal{G_ku_k}{u_k}\geq \scal{J_\km u_k}{u_k}$ for any $k\geq 1$. Therefore, \eqref{cubic decentlemma ineq:2} implies that 
    \begin{multline}\label{cubic decentlemma ineq:3}
          f(x_\kp)-f(x_k)\leq \scal{\nabla f(x_k)}{x_\kp-x_k}+\scal{G_k(x_\kp-x_k)}{x_\kp-x_k} \\
          +L_H(r_k+r_\km)\norm[]{x_\kp-x_k}^2- \frac{\mu}{2}\norm[]{x_\kp-x_k}^2\,.
    \end{multline}
    Since $x_\kp$ is the solution of the inclusion \eqref{optimality condtion:cubic} and $g$ is convex, the subgradient inequality holds:
    \begin{equation}\label{cubic decentlemma ineq:4}
        g(x_\kp) -g(x_k)\leq \scal{v_\kp}{x_\kp-x_k}\,.
    \end{equation}
    Combining \eqref{cubic decentlemma ineq:3} and \eqref{cubic decentlemma ineq:4}, we deduce that 
    \begin{equation}
    \begin{split}
        F(x_\kp) -F(x_k)&\leq \scal{v_\kp+ \nabla f(x_k)}{x_\kp-x_k}+\scal{G_k(x_\kp-x_k)}{x_\kp-x_k}
            \\&\quad+L_H(r_k+r_\km)\norm[]{x_\kp-x_k}^2 - \frac{\mu}{2}\norm[]{x_\kp-x_k}^2\\
           &=-\frac{\mu}{2}r_k^2\,,  \\
    \end{split}
    \end{equation}
    where the last equality holds due to \eqref{optimality condtion:cubic}.
\end{proof}
Due to the above descent lemma of function values, we can derive bounds for several sequences.
\begin{lemma}\label{lemma:sumincrease}
Given a number of iterations $N\in\N$, we have: 
    \begin{gather}
    \sum_{k=0}^{N} r_k^2\leq \frac{2(F(x_0) -\inf F)}{\mu}<+\infty\,, \\
    \sum_{k=0}^{N-1} r_k\leq C_0 N^\frac{1}{2}\,, \\
    \sum_{k=1}^{N} r_k\leq C_0 N^\frac{1}{2}\,,
\end{gather}
where $C_0\coloneqq \sqrt{\frac{ 2(F(x_0) -\inf F)}{\mu}}$.
\end{lemma}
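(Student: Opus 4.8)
The plan is to derive all three bounds from the descent inequality of Lemma~\ref{lemma:fdescent} together with the fact that $F$ is bounded from below (Assumption~\ref{ass:main-problem}(1)). First I would telescope: summing $F(x_{k+1})-F(x_k)\le -\frac{\mu}{2}r_k^2$ over $k=0,\dots,N$ yields
\[
    \frac{\mu}{2}\sum_{k=0}^{N} r_k^2 \;\le\; \sum_{k=0}^{N}\bigl(F(x_k)-F(x_{k+1})\bigr) \;=\; F(x_0)-F(x_{N+1}) \;\le\; F(x_0)-\inf F\,,
\]
which after dividing by $\mu/2$ gives the first inequality; finiteness is immediate since $F(x_0)-\inf F<+\infty$ by the boundedness-from-below assumption. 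This also shows $\sum_{k=0}^\infty r_k^2 \le \frac{2(F(x_0)-\inf F)}{\mu} = C_0^2$.

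For the second bound, the tool is Cauchy--Schwarz applied to the vectors $(r_0,\dots,r_{N-1})$ and $(1,\dots,1)\in\R^N$:
\[
    \sum_{k=0}^{N-1} r_k \;\le\; \Bigl(\sum_{k=0}^{N-1} r_k^2\Bigr)^{1/2}\Bigl(\sum_{k=0}^{N-1} 1\Bigr)^{1/2} \;\le\; \Bigl(\frac{2(F(x_0)-\inf F)}{\mu}\Bigr)^{1/2} N^{1/2} \;=\; C_0\, N^{1/2}\,,
\]
where in the middle step I bounded $\sum_{k=0}^{N-1} r_k^2$ by the full series bound $C_0^2$ just established. The third bound is entirely analogous, applying Cauchy--Schwarz to $(r_1,\dots,r_N)$ against the all-ones vector in $\R^N$ and again using $\sum_{k=1}^{N} r_k^2 \le C_0^2$.

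I do not anticipate a genuine obstacle here: the only mild point of care is the index bookkeeping — ensuring the telescoping sum runs over the right range so that $F(x_{N+1})$ (not $F(x_N)$) appears on the right, and that both Cauchy--Schwarz applications use exactly $N$ terms so the factor is $N^{1/2}$ rather than $(N+1)^{1/2}$ or $(N-1)^{1/2}$. One should also note that Lemma~\ref{lemma:fdescent} is stated for all $k\in\N$, so the descent inequality is valid for every index appearing in the sums, and that $r_k=\norm{u_k}\ge 0$ so the $\ell^1$ sums are sums of nonnegative terms and Cauchy--Schwarz applies without sign issues.
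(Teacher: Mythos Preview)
Your proof is correct and follows essentially the same approach as the paper: telescope the descent inequality of Lemma~\ref{lemma:fdescent} to get the first bound, then apply Cauchy--Schwarz (bounding the partial sum of squares by the full series bound $C_0^2$) to obtain the two $\ell^1$ estimates. The paper's proof is slightly terser but identical in substance.
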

\begin{proof}
    The first inequality is derived from Lemma~\ref{lemma:fdescent}. Then, we apply the Cauchy--Schwarz inequality and obtain
    \begin{equation}
        \sum_{i=0}^{N-1} r_k \leq \left(\sum_{k=0}^{N-1} r_k^2\right)^{1/2}\left(\sum_{i=0}^{N-1} 1^{\frac{2}{1}}\right)^{1/2}\leq\left(\sum_{k=0}^{\infty} r_k^2\right)^{1/2}N^{1/2}\leq C_0N^{1/2} \,.
    \end{equation}
    Similarly, we have 
    \begin{equation}
        \sum_{i=1}^{N} r_k \leq \left(\sum_{k=1}^{N} r_k^2\right)^{1/2}\left(\sum_{i=1}^{N} 1^{\frac{2}{1}}\right)^{1/2}\leq \left(\sum_{k=0}^{\infty} r_k^2\right)^{1/2}N^{1/2}\leq C_0N^{1/2} \,. \qedhere
    \end{equation}
\end{proof}

Now, we provide several simple but useful inequalities showing the relations among $G_k$, $\tilde G_k$, $J_k$.
%
Based on the growth rate of the summation over sequence $(r_k)_{k\in\N}$, we can derive an upper bound growth rate of the sequence $(\lambda_k)_{k\in\N}$.
\begin{lemma}\label{lemma:growth}
Given the number of iterations $N\in\N$, we have \[ \sum_{k=1}^{N}\lambda_k\leq 2L_HC_0N^{1/2} \] with $C_0$ as in Lemma~\ref{lemma:sumincrease}.
\end{lemma}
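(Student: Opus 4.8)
The plan is to bound $\sum_{k=1}^N \lambda_k$ by unwinding the definition $\lambda_k = L_H r_{k-1} + L_H r_k$ from Step~\ref{alg:SR1_da_PQN-step2} and then invoking the summation bounds already established in Lemma~\ref{lemma:sumincrease}. First I would write
\[
    \sum_{k=1}^{N}\lambda_k = \sum_{k=1}^{N} \left( L_H r_{k-1} + L_H r_k \right) = L_H \sum_{k=1}^{N} r_{k-1} + L_H \sum_{k=1}^{N} r_k = L_H \sum_{k=0}^{N-1} r_k + L_H \sum_{k=1}^{N} r_k \,.
\]
Then I would apply the second and third inequalities of Lemma~\ref{lemma:sumincrease}, namely $\sum_{k=0}^{N-1} r_k \leq C_0 N^{1/2}$ and $\sum_{k=1}^{N} r_k \leq C_0 N^{1/2}$, to each of the two sums, yielding
\[
    \sum_{k=1}^{N}\lambda_k \leq L_H C_0 N^{1/2} + L_H C_0 N^{1/2} = 2 L_H C_0 N^{1/2} \,,
\]
which is exactly the claimed bound.

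This argument is essentially a one-line reindexing plus two applications of a prior lemma, so I do not anticipate any genuine obstacle; the only point requiring care is the off-by-one bookkeeping in the index shift $\sum_{k=1}^N r_{k-1} = \sum_{k=0}^{N-1} r_k$, which is precisely why Lemma~\ref{lemma:sumincrease} was stated with both the $\sum_{k=0}^{N-1}$ and the $\sum_{k=1}^{N}$ versions of the bound. One should also note that $r_{-1} = 0$ by the initialization in Algorithm~\ref{Alg:SR1_da_PQN}, which is consistent with (though not strictly needed for) the $k=1$ term $\lambda_1 = L_H r_0 + L_H r_1$. I would present the whole proof in three short displayed lines as above and conclude with \qedhere.
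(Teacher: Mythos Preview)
Your proof is correct and follows exactly the approach the paper takes: the paper's own proof is the one-line remark ``The result follows by combining the definition of $\lambda_k$ with Lemma~\ref{lemma:sumincrease},'' and you have simply written out that combination explicitly. The index shift and the use of both sum bounds from Lemma~\ref{lemma:sumincrease} are precisely what is intended.
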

\begin{proof}
The result follows by combining the definition of $\lambda_k$ with Lemma~\ref{lemma:sumincrease}.
    
\end{proof}
\begin{lemma}\label{ccubic:ukGu}
For any $k\in \N$, we have 
\begin{equation}
    u_k^\top \tilde G_k u_k\leq \frac{1}{\mu}\norm[]{F^\prime (x_k)}^2\,.
\end{equation}
\end{lemma}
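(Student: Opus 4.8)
The plan is to view $F'(x_k)$ not merely as a subgradient of $F$ at $x_k$, but as a subgradient \emph{of the subproblem objective} $\phi_k$ that is minimized in~\eqref{alg:update} at the point $x_k$, and then to sandwich the decrease $\phi_k(x_k)-\phi_k(x_\kp)$: from above by $\tfrac{1}{2\mu}\norm[]{F'(x_k)}^2$ via strong convexity, and from below by $\tfrac12 u_k^\top\tilde G_k u_k$ via the optimality condition~\eqref{optimality condtion:cubic}.

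Write $\phi_k(x):= g(x)+\scal{\nabla f(x_k)}{x-x_k}+\tfrac12\norm[G_k+ L_Hr_\km\opid]{x-x_k}^2+\tfrac{L_H}{3}\norm[]{x-x_k}^3$, so that $x_\kp=\argmin_x\phi_k(x)$. Two structural facts are needed. First, the quadratic and cubic regularizers together with their gradients vanish at $x_k$, hence $\partial\phi_k(x_k)=\partial g(x_k)+\nabla f(x_k)=\partial F(x_k)\ni F'(x_k)$. Second, $\phi_k$ is $\mu$-strongly convex: $g$, the linear term and $\tfrac{L_H}{3}\norm[]{\cdot-x_k}^3$ are convex, while the quadratic term has Hessian $G_k+L_Hr_\km\opid\succeq G_k\succeq\mu\opid$, since $G_0=L\opid\succeq\mu\opid$ and, for $k\ge1$, $G_k\succeq J_\km\succeq\mu\opid$ by Lemmas~\ref{lemma:GgeqJ} and~\ref{lemma:JleqJ}. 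Applying the elementary gradient-domination bound for the $\mu$-strongly convex $\phi_k$ — namely $\phi_k(x_k)-\min\phi_k\le\tfrac{1}{2\mu}\norm[]{s}^2$ for any $s\in\partial\phi_k(x_k)$, obtained by minimizing over $y$ the quadratic lower bound $\phi_k(y)\ge\phi_k(x_k)+\scal{s}{y-x_k}+\tfrac\mu2\norm[]{y-x_k}^2$ — with $s=F'(x_k)$ and $\min\phi_k=\phi_k(x_\kp)$ yields $\phi_k(x_k)-\phi_k(x_\kp)\le\tfrac{1}{2\mu}\norm[]{F'(x_k)}^2$.

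For the matching lower bound I would use the optimality condition~\eqref{optimality condtion:cubic}, which gives $v_\kp=-\nabla f(x_k)-\tilde G_ku_k\in\partial g(x_\kp)$. Convexity of $g$ then yields $g(x_k)-g(x_\kp)\ge-\scal{v_\kp}{u_k}=\scal{\nabla f(x_k)}{u_k}+u_k^\top\tilde G_ku_k$. Substituting this into the identity $\phi_k(x_k)-\phi_k(x_\kp)=\bigl(g(x_k)-g(x_\kp)\bigr)-\scal{\nabla f(x_k)}{u_k}-\tfrac12 u_k^\top(G_k+L_Hr_\km\opid)u_k-\tfrac{L_H}{3}r_k^3$, the terms $\scal{\nabla f(x_k)}{u_k}$ cancel; using $u_k^\top(G_k+L_Hr_\km\opid)u_k=u_k^\top\tilde G_ku_k-L_Hr_k^3$ (which holds because $\tilde G_k=G_k+\lambda_k\opid$, $\lambda_k=L_Hr_\km+L_Hr_k$ and $\norm[]{u_k}^3=r_k^3$) one obtains $\phi_k(x_k)-\phi_k(x_\kp)\ge u_k^\top\tilde G_ku_k-\tfrac12\bigl(u_k^\top\tilde G_ku_k-L_Hr_k^3\bigr)-\tfrac{L_H}{3}r_k^3=\tfrac12 u_k^\top\tilde G_ku_k+\tfrac16 L_Hr_k^3\ge\tfrac12 u_k^\top\tilde G_ku_k$. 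Chaining the two bounds gives $\tfrac12 u_k^\top\tilde G_ku_k\le\tfrac{1}{2\mu}\norm[]{F'(x_k)}^2$, i.e. $u_k^\top\tilde G_ku_k\le\tfrac1\mu\norm[]{F'(x_k)}^2$.

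I expect the only (mild) obstacle to be getting the constant exactly right: one has to keep the nonnegative cubic leftover $\tfrac16 L_Hr_k^3$ rather than discarding it early, and handle the quadratic and cubic regularizers by exact identities rather than inequalities, since a looser estimate would introduce an extra factor depending on $L/\mu$. The remaining bookkeeping point is the base case $k=0$, where $F'(x_0)$ is understood as the fixed subgradient of $F$ at $x_0$ and $r_{-1}=0$, so that $\phi_0$ is still $\mu$-strongly convex (its quadratic part has Hessian $G_0=L\opid$) and $F'(x_0)\in\partial\phi_0(x_0)=\partial F(x_0)$; the argument is otherwise index-free.
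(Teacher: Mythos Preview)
Your proof is correct and takes a genuinely different route from the paper. The paper argues purely at the level of inner products: it uses monotonicity of $\partial g$ to obtain $u_k^\top\tilde G_ku_k\le -u_k^\top F'(x_k)$, then applies Cauchy--Schwarz in the $\tilde G_k$-metric followed by Young's inequality and $\tilde G_k^{-1}\preceq\mu^{-1}\opid$ (from Lemma~\ref{lemma:GgeqJ}) to absorb the cross term. Your approach is variational: you sandwich the objective-value gap $\phi_k(x_k)-\phi_k(x_\kp)$ of the subproblem between $\tfrac12 u_k^\top\tilde G_ku_k$ (via the optimality condition and convexity of $g$) and $\tfrac{1}{2\mu}\norm[]{F'(x_k)}^2$ (via the gradient-domination bound for the $\mu$-strongly convex $\phi_k$, using $G_k\succeq\mu\opid$). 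Both rely on the same underlying ingredients---convexity of $g$ and the lower bound $\mu\opid$ on the metric---but package them differently. The paper's argument is slightly shorter and never needs to evaluate $\phi_k$; yours is more geometric, yields the sharper intermediate bound $\phi_k(x_k)-\phi_k(x_\kp)\ge\tfrac12 u_k^\top\tilde G_ku_k+\tfrac16 L_Hr_k^3$, and makes transparent \emph{why} the constant is exactly $1/\mu$ (it is the Polyak--\L{}ojasiewicz constant of $\phi_k$). Your handling of the $k=0$ base case is also appropriate; the paper implicitly makes the same convention for $F'(x_0)$.
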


\begin{proof}
    Since $g$ is convex, hence, its subdifferential monotone, we have $u_k^\top(v_\kp-v_k)\geq 0$. 
According to \eqref{optimality condtion:cubic}, we obtain that $\tilde G_ku_k=-\nabla f(x_k)-v_\kp$. Then we can deduce that:
    \begin{equation}
    \begin{split}
         u_k^\top \tilde G_k u_k
         &\leq u_k^\top \tilde G_k u_k+ u_k^\top (v_\kp-v_k)\\
         &=u_k^\top\left( -\nabla f(x_k)-v_\kp+v_\kp-v_k   \right)\\
         &= -u_k^\top F^\prime(x_k)\\
         &=-u_k^\top \tilde G_k^{1/2}\tilde G_k^{-1/2}F^\prime(x_k)\\
         &\leq \norm[\tilde G_k]{u_k}\norm[\tilde G_k^{-1}]{F^\prime(x_k)}  \\
         &\leq  \frac{1}{2}u_k^\top \tilde G_k u_k +\frac{1}{2}\norm[\tilde G_k^{-1}]{F^\prime(x_k)}^2\\
         &\leq  \frac{1}{2}u_k^\top \tilde G_k u_k +\frac{1}{2\mu}\norm[]{F^\prime(x_k)}^2\,,
    \end{split}
    \end{equation}
    where the second inequality uses Cauchy-Schwarz inequality, the third inequality uses Young's inequality and the last inequality holds since  $\tilde G_k^{-1}\preceq \frac{1}{\mu}\opid$ (see Lemma~\ref{lemma:GgeqJ} and Lemma~\ref{lemma:JleqJ}). By rearranging the above inequality, we obtain the desired result.
\end{proof}

The following lemma is a crucial estimate of the descent of the corrected SR1 metric $\tilde G_k$ with respect to our potential function $V(\tilde G_k)=\trace \tilde G_k$.
\begin{lemma}\label{lemma:maindescent}
For any $k\in\N$, we have the following descent inequality:
\begin{equation}\label{lemma1:VminusV}
    V(\tilde G_k)-V(\tilde G_\kp)\geq \frac{\mu g^2_\kp}{g_k^2}-n\lambda_\kp\,,
\end{equation}
where $g_k\coloneqq \norm[]{F^\prime(x_k)}$.
\end{lemma}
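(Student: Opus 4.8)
The plan is to track how the potential $V(\tilde G_k)=\trace \tilde G_k$ changes in one step, isolating two effects: the improvement coming from the SR1 update of the metric and the enlargement caused by the two correction/regularization ingredients ($\lambda_\kp$ scaling and the addition of $L_H(r_k+r_\km)\opid$). First I would write the telescoping difference as $V(\tilde G_k)-V(\tilde G_\kp) = \bigl(V(\tilde G_k)-V(G_\kp)\bigr) + \bigl(V(G_\kp)-V(\tilde G_\kp)\bigr)$. For the first bracket, I apply Lemma~\ref{lemma:potentialfunction} with $A=J_k$, $G=\tilde G_k$ (which is legitimate because $0\prec\mu\opid\preceq J_k\preceq\tilde G_k$ by Lemmas~\ref{lemma:JleqJ} and~\ref{lemma:GgeqJ}), so that $V(\tilde G_k)-V(G_\kp)=\nu(J_k,\tilde G_k,u_k)\geq \frac{\norm[]{(\tilde G_k-J_k)u_k}^2}{u_k^\top\tilde G_k u_k}$. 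Here the numerator is exactly $\norm[]{F^\prime(x_\kp)}^2=g_\kp^2$ because by Step~\ref{alg:SR1_da_PQN-step3}, $F^\prime(x_\kp)=\nabla f(x_\kp)-\nabla f(x_k)-\tilde G_k u_k = J_k u_k-\tilde G_k u_k = -(\tilde G_k-J_k)u_k$. For the denominator, Lemma~\ref{ccubic:ukGu} gives $u_k^\top\tilde G_k u_k\leq \frac{1}{\mu}\norm[]{F^\prime(x_k)}^2=\frac{1}{\mu}g_k^2$. Combining, the first bracket is at least $\frac{g_\kp^2}{\,\frac{1}{\mu}g_k^2\,}=\frac{\mu g_\kp^2}{g_k^2}$.

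For the second bracket I need a lower bound on $V(G_\kp)-V(\tilde G_\kp)$, i.e.\ an upper bound on how much the correction step inflates the trace. The key identity is $\tilde G_\kp = G_\kp + \lambda_\kp\opid$ in the notation of Algorithm~\ref{Alg:SR1_da_PQN} --- wait, here one must be careful: in Algorithm~\ref{Alg:SR1_da_PQN} the corrected metric is $\tilde G_\kp = G_\kp + L_H(r_\kp+r_k)\opid$ and $\lambda_\kp = L_H(r_\kp+r_k)$ by Step~\ref{alg:SR1_da_PQN-step2}, so $V(G_\kp)-V(\tilde G_\kp) = -\trace(\lambda_\kp\opid) = -n\lambda_\kp$. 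Hence the second bracket equals exactly $-n\lambda_\kp$. Adding the two brackets yields $V(\tilde G_k)-V(\tilde G_\kp)\geq \frac{\mu g_\kp^2}{g_k^2}-n\lambda_\kp$, which is precisely \eqref{lemma1:VminusV}.

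The only genuinely delicate points are bookkeeping ones, and I would state them explicitly. First, one must confirm that Lemma~\ref{lemma:potentialfunction} applies with $A=J_k$: this needs $0\prec J_k$, which holds since $\mu\opid\preceq J_k$ by $\mu$-strong convexity, and $J_k\preceq\tilde G_k$, which is exactly Lemma~\ref{lemma:GgeqJ}. Second, the identification of the SR1 numerator with $g_\kp^2$ relies on writing out $F^\prime(x_\kp)$ from Step~\ref{alg:SR1_da_PQN-step3} together with $J_k u_k=\nabla f(x_\kp)-\nabla f(x_k)$; this is the same observation used in Lemma~\ref{lemma:fdescent}. Third, one needs $g_k=\norm[]{F^\prime(x_k)}>0$ so that dividing by $g_k^2$ is legitimate --- this is exactly the standing assumption made at the start of Section~\ref{Section:conv}. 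No step requires heavy computation; the main ``obstacle'' is simply to line up the definitions of $\tilde G_k$, $\lambda_k$, $F^\prime(x_\kp)$ and $J_k$ so that the two telescoped pieces collapse to the stated bound. Once these are in place the inequality follows immediately.
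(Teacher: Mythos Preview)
Your proposal is correct and follows essentially the same approach as the paper: decompose $V(\tilde G_k)-V(\tilde G_\kp)$ into the SR1 part $V(\tilde G_k)-V(G_\kp)$ (bounded below via Lemma~\ref{lemma:potentialfunction} and Lemma~\ref{ccubic:ukGu}, with the numerator identified as $g_\kp^2$ through $F'(x_\kp)=-(\tilde G_k-J_k)u_k$) and the correction part $V(G_\kp)-V(\tilde G_\kp)=-n\lambda_\kp$. The bookkeeping points you flag (applicability of Lemma~\ref{lemma:potentialfunction} via $J_k\preceq\tilde G_k$, the identity $J_ku_k=\nabla f(x_\kp)-\nabla f(x_k)$, and $g_k>0$) are exactly the ones the paper relies on.
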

\begin{proof}
By the optimality condition \eqref{optimality condtion:cubic}, we have $\tilde G_k u_k+v_\kp= -\nabla f(x_k)$ and by definition of $J_k$, we have $J_ku_k=\nabla f(x_\kp)-\nabla f(x_k)$. Using Lemma~\ref{lemma:potentialfunction}, we make the following estimation:
    \begin{equation}
    \begin{split}
         V(\tilde G_k) - V(G_\kp) 
         = \nu(J_k,\tilde G_k,u_k) 
         \geq&\ \frac{\vnorm{(\tilde G_k-J_k)u_k}^2}{u_k^\top \tilde G_k u_k}
         =\ \frac{\vnorm{\nabla f(x_\kp)+v_\kp}^2}{u_k^\top \tilde G_k u_k} \\
         =&\ \frac{\vnorm[]{F^\prime (x_\kp)}^2}{u_k^\top\tilde G_k u_k}
          \geq \frac{\mu\norm[]{F^\prime (x_\kp)}^2}{\norm[]{F^\prime(x_k)}^2}
          =\frac{\mu g_\kp^2}{g_k^2}\,,
    \end{split}
    \end{equation}
    where the second inequality holds due to Lemma~\ref{ccubic:ukGu}. Summing this with 
    \begin{equation}\label{cubic ineq:V1}
        V(G_\kp)-V(\tilde G_\kp) = \trace(G_\kp-G_\kp-\lambda_\kp\opid) \geq - n\lambda_\kp\,,
    \end{equation}
    we obtain the desired inequality. 

    
\end{proof}

Now, we are ready to prove our main theorem for Algorithm~\ref{Alg:SR1_da_PQN}.
\begin{proof}[\textbf{Proof of Theorem~\ref{thm:main-cubic}.}]
For simplicity, we denote $\gamma_k^2 \coloneqq\frac{g^2_\kp}{g^2_k}$, where $g_k\coloneqq \norm[]{F^\prime(x_k)}$.
    By Lemma~\ref{lemma:maindescent}, summing \eqref{lemma1:VminusV} from $k=0$ to $N$, we obtain
    \begin{equation}
        V(\tilde G_0)-V(\tilde G_N)\geq \mu\sum_{k=0}^{N-1} \gamma_k^2 - n\sum_{k=1}^{
        N} \lambda_k\,.
    \end{equation}
    Since, for every $k$, our method preserves the relation $\tilde G_k\succeq G_\kp\succeq J_k$ (see Lemma~\ref{lemma:GgeqJ}),  we have $V(\tilde G_k)>0$ for all $k\in\N$. Therefore, we deduce that
    \begin{equation}\label{ineq:sumineq}
    \begin{split}
    V(\tilde G_0)&\geq \mu \sum_{k=0}^{N-1} \gamma_k^2 - n\sum_{k=1}^{
        N} \lambda_k\,.\\
    \end{split}
    \end{equation}
    By Lemma~\ref{lemma:growth}, we obtain 
    \begin{equation}\label{mainstep}
    CN^{1/2}\geq \frac{1}{\mu}\left(V(G_0)+ 2nL_HC_0N^{1/2}\right)\geq \sum_{k=0}^{N-1}\gamma_k^2\,.
    \end{equation}
    where $C\coloneqq (2nL_HC_0 + V(G_0))/\mu$. Dividing \eqref{mainstep} by $N$ on both sides, 
    we deduce with the concavity and monotonicity of $\log x$ that 
    \begin{equation}
    \begin{split}
          \log\frac{C}{N^{1/2}}&\geq \log\left(\frac{1}{N}\sum_{k=0}^{N-1}\gamma_k^2\right)\geq\frac{1}{N}\sum_{k=0}^{N-1}\log(\gamma_k^2)\geq \log \left(\left(\prod_{k=0}^{N-1}\frac{g_\kp^2}{g_k^2}\right)^{1/N}\right)=\log\left(\left(\frac{g_N}{g_0}\right)^{2/N}\right)\,.\\
    \end{split}
    \end{equation}
    Thus, we obtain the result
    \begin{equation}
        g_N\leq \left(\frac{C}{N^{1/2}}\right)^{N/2}g_0\,.    \qedhere
    \end{equation}
\end{proof}

\subsection{Convergence analysis of Algorithm~\ref{Alg:SR1_da_PQN2}}\label{conv:alg2}

Now, let us analyze Algorithm~\ref{Alg:SR1_da_PQN2}.
The optimality condition of the update step in \eqref{alg2:update} is 
\begin{equation} \label{optimality condition:grad}   
\nabla f(x_k) + v_\kp+ \tilde G_k(x_\kp-x_k)=0\,,
\end{equation}
where $v_\kp\coloneqq -\nabla f(x_k) -\tilde G_k(x_\kp-x_k) \in \partial g(x_\kp)$.

As in the previous section, we need several bounds for sequences $(\tilde G_k)_{k\in\N}$, $( G_k)_{k\in\N}$ and $(J_k)_{k\in\N}$.
\begin{lemma}\label{lemma2:bound}
For each $k\in\N$, we have 
\begin{gather}\label{eq:lemma2:bound}
    \mu\opid\preceq J_k\preceq G_{k+1}\preceq \tilde G_k\,,\\
    \trace G_{k+1}\leq \trace \tilde G_k\leq n\bar\kappa\,.
\end{gather}
\end{lemma}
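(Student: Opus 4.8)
The plan is to prove both displays simultaneously by induction on $k$, the real content being the middle inequality $J_k\preceq\tilde G_k$; everything else then follows mechanically. Indeed, once $0\prec\mu\opid\preceq J_k\preceq\tilde G_k$ is known (the left inequality is Lemma~\ref{lemma:JleqJ}), applying Lemma~\ref{lemma:Anton} to $G_\kp=\text{SR1}(J_k,\tilde G_k,u_k)$ gives $\mu\opid\preceq J_k\preceq G_\kp\preceq\tilde G_k$, and since $A\preceq B$ implies $\trace(B-A)\ge 0$, also $\trace G_\kp\le\trace\tilde G_k$. The remaining bound $\trace\tilde G_k\le n\bar\kappa$ holds by inspection of Step~\ref{alg:SR1_da_PQN2-step4}: either the restart test passed, so $\tilde G_k=\hat G_k$ and $\trace\hat G_k\le n\bar\kappa$ by construction, or a restart occurred and $\trace\tilde G_k=\trace(L\opid)=nL\le n\bar\kappa$ because $\bar\kappa\ge L$.

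The base case only requires $J_0\preceq\tilde G_0=L\opid$, which holds because $\nabla^2 f\preceq L\opid$ everywhere; likewise, whenever a restart happens at step $k$ we get $\tilde G_k=L\opid\succeq J_k$ for free. So the whole difficulty is the inductive step in the \emph{no-restart} branch, where $\tilde G_k=(1+\lambda_k)G_k$ and $\mu\lambda_k=\sqrt{L_H\norm[]{F^\prime(x_k)}}+L_Hr_\km$. There I would combine Lemma~\ref{lemma:JleqJ} with the induction hypothesis $\mu\opid\preceq J_\km\preceq G_k$ (hence $\opid\preceq\tfrac1\mu G_k$):
\[
J_k \preceq J_\km + L_H(r_k + r_\km)\opid \preceq G_k + \frac{L_H(r_k + r_\km)}{\mu}\, G_k ,
\]
so it suffices to show $L_H(r_k+r_\km)\le\mu\lambda_k$, i.e., after cancelling $L_Hr_\km$, the single scalar inequality $L_Hr_k^2\le\norm[]{F^\prime(x_k)}$. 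This is the heart of the lemma: the gradient regularization must be strong enough to make the step $r_k$ so short that the curvature change it causes is already absorbed by the regularization itself. I expect this to be the main obstacle.

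To establish $L_Hr_k^2\le\norm[]{F^\prime(x_k)}$ I would argue via a step-length estimate. Let $v_k\in\partial g(x_k)$ and $v_\kp\in\partial g(x_\kp)$ be the subgradients supplied by the optimality conditions of \eqref{alg2:update} at iterations $k-1$ and $k$, so that $\nabla f(x_k)=-v_\kp-\tilde G_k u_k$ and $F^\prime(x_k)=\nabla f(x_k)+v_k$. Then
\[
\scal{F^\prime(x_k)}{u_k} = -\scal{\tilde G_k u_k}{u_k} - \scal{v_\kp - v_k}{u_k} \le -\scal{\tilde G_k u_k}{u_k}
\]
by monotonicity of $\partial g$; combining with $\tilde G_k=(1+\lambda_k)G_k\succeq(1+\lambda_k)\mu\opid$ and Cauchy--Schwarz gives $\norm[]{F^\prime(x_k)}\, r_k \ge \scal{\tilde G_k u_k}{u_k} \ge (1+\lambda_k)\mu r_k^2$, hence $(1+\lambda_k)\mu r_k\le\norm[]{F^\prime(x_k)}$ (the case $r_k=0$ being trivial). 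Since $(1+\lambda_k)\mu\ge\mu\lambda_k\ge\sqrt{L_H\norm[]{F^\prime(x_k)}}$ by the definition of $\lambda_k$, this yields $r_k\le\sqrt{\norm[]{F^\prime(x_k)}/L_H}$, which is exactly $L_Hr_k^2\le\norm[]{F^\prime(x_k)}$. It is precisely the $\sqrt{\cdot}$-scaling built into $\lambda_k$ that makes this closing inequality work, and the same scaling is ultimately responsible for the global rate being of order $N^{1/4}$ in Theorem~\ref{thm:main-grad} rather than $N^{1/2}$ as for the cubic variant (Theorem~\ref{thm:main-cubic}).
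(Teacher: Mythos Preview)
Your proposal is correct and follows essentially the same route as the paper's own proof: induction on $k$, trivial base and restart cases, and in the no-restart branch the key step-length estimate $L_Hr_k\le\sqrt{L_H\norm[]{F^\prime(x_k)}}$ obtained from monotonicity of $\partial g$ combined with $\tilde G_k\succeq\mu\lambda_k\opid$ (you use the slightly sharper $\tilde G_k\succeq(1+\lambda_k)\mu\opid$, which makes no difference), after which Lemma~\ref{lemma:JleqJ} and the induction hypothesis $J_\km\preceq G_k$ close the argument. The only cosmetic difference is that the paper writes the final chain as $G_k(1+\lambda_k)\succeq J_\km(1+\lambda_k)\succeq J_\km+\mu\lambda_k\opid\succeq J_k$, whereas you absorb the identity into $G_k$ via $\opid\preceq\tfrac1\mu G_k$; both amount to the same inequality $L_H(r_k+r_\km)\le\mu\lambda_k$.
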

\begin{proof}
We first notice that since $g$ is convex, the monotonicity of its subdifferential yields $$u_k^\top({v_\kp-v_k})\geq 0\,,$$ for any $k\in\N$. Then, for any $k\in\N$, we have
\begin{equation}\label{grad ineq:ukF}
    u_k^\top \tilde G_k u_k\leq u_k^\top \tilde G_k u_k + u_k^\top (v_\kp-v_k)=u_k^\top(-\nabla f(x_k) -v_\kp+v_\kp - v_k) =- u_k^\top F^\prime (x_k) \,.
\end{equation}
The first inequality in \eqref{eq:lemma2:bound} is a direct consequence of the strong convexity of $f$. 

Now, we prove the remaining inequalities in \eqref{eq:lemma2:bound} by induction. In order to validate the base case $k=0$, we first observe that $\tilde G_0=G_0(1+\lambda_0)\succeq J_0$ holds by Lipschitz continuity of $\nabla f$ and $G_0=LI$ and $\lambda_0=0$. Then, Lemma~\ref{lemma:Anton} shows the desired property:
\[
J_0\preceq G_1=\text{SR1}(J_0,\tilde G_0,u_0)\preceq\tilde G_0\preceq nL\opid\preceq n\bar\kappa\opid\,.
\]
For showing the induction, we suppose now that \eqref{eq:lemma2:bound} holds for $k=N-1$. 
%
%
We discuss by cases:
\begin{enumerate}[1.]
        \item If $\trace \hat G_{N}\leq n\bar\kappa$, then, $\tilde G_N=\hat G_N$.  Since $\tilde G_N=G_N(1+\lambda_N)\geq \mu\lambda_N $ and Cauchy--Schwarz inequality, the above equality \eqref{grad ineq:ukF} implies
\begin{equation}
    \mu\lambda_N \norm[]{u_N}^2 \leq u_N^\top \tilde G_N u_N=-u_N^\top F^\prime(x_N)\leq \norm[]{u_N}\norm[]{F^\prime(x_N)}\,.
\end{equation}
Thus, $ \norm[]{u_N}\leq \frac{\norm[]{F^\prime(x_N)}}{\mu\lambda_N}$. 
Therefore,
\begin{equation}
    L_Hr_N= L_H\norm[]{u_N}\leq \frac{L_H\norm[]{F^\prime(x_N)}}{\mu\lambda_N}\leq \sqrt{L_H\norm[]{F^\prime(x_N)} }\,,
\end{equation}
where the last inequality holds since $\mu\lambda_N\geq \sqrt{L_H\norm[]{F^\prime(x_N)} }$ (see the definition of $\lambda_k$).
We can deduce that:
\begin{equation}
     L_Hr_N + L_Hr_{N-1} \leq \sqrt{L_H\norm[]{F^\prime(x_N)}} + L_Hr_{N-1}=\mu\lambda_N\,.
\end{equation}
Therefore, we deduce by the induction hypothesis and Lemma~\ref{lemma:JleqJ} that
\begin{equation}
    \begin{split}
         \ \tilde G_N &= G_N(1+\lambda_N)\\ 
        &\succeq J_\Nm(1 + \lambda_N)\\&\succeq J_\Nm + \mu\lambda_N\opid\\&\succeq J_\Nm + (L_Hr_\Nm + L_Hr_N)\opid\\&\succeq J_N\,.\\
    \end{split}
    \end{equation}
    Using Lemma~\ref{lemma:Anton} again, we obtain $J_N\preceq G_\Np=\text{SR1}(J_N,\tilde G_N,u_N) \preceq \tilde G_N$ and $\trace G_\Np\leq \trace \tilde G_N\leq n\bar\kappa$. 
    
\item If $\trace \hat G_N>n\bar \kappa$, then $\tilde G_N=L\opid$ due to the restarting step.
    Automatically, we have $\tilde G_N\succeq J_N$ and using Lemma~\ref{lemma:Anton} again, we obtain $J_N\preceq G_{N+1}\preceq \tilde G_N\preceq L\opid$ and $\trace G_\Np\leq \trace \tilde G_N=nL\leq n\bar\kappa$.\qedhere
    \end{enumerate}
\end{proof}
\begin{remark}
    Lemma~\ref{lemma2:bound} indicates that both $G_\kp$ and $\tilde G_k$ are bounded by $n\bar\kappa\opid$ for any $k\in\N$ due to the restarting strategy, while the candidate $\hat G_k$ has the possibility to be larger than $n\bar\kappa\opid$ where $\bar\kappa\geq L$. Since our potential function is simple and does not involve any Hessian term, the computational cost of the simple restarting criterion $\trace \hat G_\kp\leq n\bar\kappa$ is only $O(n)$, which is another advantage of our potential function. 
\end{remark}

Based on the lemma above, we can derive a descent lemma for the objective values.
\begin{lemma}[descent lemma for function values]\label{lemma2:descentVal}
For any $k\in\N$, we have
\begin{equation}
    F(x_\kp) - F(x_k)\leq-\frac{\mu}{2}r_k^2\,.
\end{equation}
\end{lemma}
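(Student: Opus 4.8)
The plan is to imitate the proof of Lemma~\ref{lemma:fdescent} for the cubic scheme, noting that the gradient-regularized update is in fact structurally simpler: the crucial input is the metric bound $J_k\preceq\tilde G_k$ already established in Lemma~\ref{lemma2:bound}, which here directly replaces the two-step chaining ($J_\km\preceq G_k$ together with $J_k\preceq J_\km+L_H(r_k+r_\km)\opid$) that was needed for Algorithm~\ref{Alg:SR1_da_PQN}, and there is no longer an extra cubic-regularization term to absorb.

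First I would apply $\mu$-strong convexity of $f$ at $x=x_\kp$ and $y=x_k$ to get
\[
    f(x_\kp)-f(x_k)\leq \scal{\nabla f(x_\kp)}{u_k}-\frac{\mu}{2}r_k^2
    =\scal{\nabla f(x_k)}{u_k}+\scal{J_k u_k}{u_k}-\frac{\mu}{2}r_k^2\,,
\]
where the identity $J_k u_k=\nabla f(x_\kp)-\nabla f(x_k)$ comes from the definition of $J_k$. Since $J_k\preceq\tilde G_k$ by Lemma~\ref{lemma2:bound}, I would then bound $\scal{J_k u_k}{u_k}\leq\scal{\tilde G_k u_k}{u_k}$. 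Separately, because $x_\kp$ minimizes the strongly convex subproblem \eqref{alg2:update}, the vector $v_\kp\in\partial g(x_\kp)$ from the optimality condition \eqref{optimality condition:grad} satisfies the subgradient inequality $g(x_\kp)-g(x_k)\leq\scal{v_\kp}{u_k}$.

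Adding the two estimates gives
\[
    F(x_\kp)-F(x_k)\leq\scal{v_\kp+\nabla f(x_k)}{u_k}+\scal{\tilde G_k u_k}{u_k}-\frac{\mu}{2}r_k^2\,,
\]
and the optimality condition \eqref{optimality condition:grad}, namely $v_\kp+\nabla f(x_k)=-\tilde G_k u_k$, makes the first two inner products cancel, leaving exactly $-\frac{\mu}{2}r_k^2$. I do not anticipate a real obstacle; the only point requiring care is that $J_k\preceq\tilde G_k$ must hold uniformly in $k$ (in particular after a restart, where $\tilde G_k=L\opid$), which is precisely guaranteed by Lemma~\ref{lemma2:bound} in both branches of the correction step, so the argument is valid for every $k\in\N$ without case distinction.
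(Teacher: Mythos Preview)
Your proposal is correct and follows essentially the same approach as the paper: strong convexity of $f$ to introduce $\scal{J_k u_k}{u_k}$, the bound $J_k\preceq\tilde G_k$ from Lemma~\ref{lemma2:bound}, the subgradient inequality for $g$, and cancellation via the optimality condition~\eqref{optimality condition:grad}. Your remark that the chaining through $J_\km$ used in Lemma~\ref{lemma:fdescent} is no longer needed here is exactly the simplification the paper also exploits.
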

\begin{proof} The argument is similar with the one of Lemma~\ref{lemma:fdescent}. By the strong convexity of $f$, we have
    \begin{equation}\label{grad decentlemma ineq:1}
        \begin{split}
            f(x_\kp)-f(x_k)&\leq \scal{\nabla f(x_k)}{x_\kp-x_k}+\scal{\nabla f(x_\kp)-\nabla f(x_k)}{x_\kp-x_k} - \frac{\mu}{2}\norm[]{x_\kp-x_k}^2\\
            &=\scal{\nabla f(x_k)}{x_\kp-x_k}+\scal{J_k(x_\kp-x_k)}{x_\kp-x_k} - \frac{\mu}{2}\norm[]{x_\kp-x_k}^2\\
            &\leq \scal{\nabla f(x_k)}{x_\kp-x_k}+\scal{\tilde G_k(x_\kp-x_k)}{x_\kp-x_k}
            - \frac{\mu}{2}\norm[]{x_\kp-x_k}^2\,,\\
        \end{split}
    \end{equation}
    where the second inequality above holds due to Lemma~\ref{lemma2:bound}.
    Since $x_\kp$ is the solution of \eqref{optimality condition:grad} and $g(x)$ is convex, we can derive that 
    \begin{equation}\label{grad decentlemma ineq:2}
        g(x_\kp) -g(x_k)\leq \scal{v_\kp}{x_\kp-x_k}\,.
    \end{equation}
    Combining \eqref{grad decentlemma ineq:1} and \eqref{grad decentlemma ineq:2}, we deduce that 
    \begin{equation}
    \begin{split}
        F(x_\kp) -F(x_k)&\leq \scal{v_\kp+ \nabla f(x_k)}{x_\kp-x_k}+\scal{\tilde G_k(x_\kp-x_k)}{x_\kp-x_k}
            - \frac{\mu}{2}\norm[]{x_\kp-x_k}^2\\
            &=-\frac{\mu}{2}r_k^2\,,\\
    \end{split}
    \end{equation}
    where the last equality holds due to \eqref{optimality condition:grad}.
\end{proof}
Next, we study the growth rates of several important sequences.
\begin{lemma}\label{lemma2:growthrk}
Given a number of iteration $N\in\N$, we have:
    \begin{gather}
        \sum_{k=0}^{\textcolor{blue}{N}} r_k^2\leq \frac{2(F(x_0)-\inf F)}{\mu} < \infty \,; \label{eq:lemma2:growthrk-1}\\
        \sum_{k=0}^{N-1}r_k\leq C_0 N^{1/2}\quad \text{and}\quad \sum_{k=1}^{N}r_k\leq C_0 N^{1/2} \,; \\
        \sum_{k=1}^N \lambda_k\leq \Theta N^{3/4}\quad \text{where}\ \Theta := \frac{1}{\mu}(L_HC_0 +\sqrt{L_H(L+n\bar\kappa)C_0})\,,
    \end{gather} 
    where $C_0\coloneqq \sqrt{\frac{2(F(x_0)-\inf F)}{\mu}}$.
\end{lemma}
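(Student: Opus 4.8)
The plan is to mirror the structure of Lemma~\ref{lemma:sumincrease} and Lemma~\ref{lemma:growth} from the analysis of Algorithm~\ref{Alg:SR1_da_PQN}, adapting the last estimate to account for the fact that here $\lambda_k$ depends on $\sqrt{L_H\norm[]{F^\prime(x_k)}}$ rather than just on $r_k$. First, inequality \eqref{eq:lemma2:growthrk-1} follows immediately by telescoping the descent inequality of Lemma~\ref{lemma2:descentVal}: summing $F(x_\kp)-F(x_k)\leq-\frac{\mu}{2}r_k^2$ over $k=0,\dots,N$ gives $\frac{\mu}{2}\sum_{k=0}^{N} r_k^2\leq F(x_0)-F(x_{N+1})\leq F(x_0)-\inf F$, using that $F$ is bounded below by Assumption~\ref{ass:main-problem}. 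Second, the two bounds on $\sum r_k$ come from Cauchy--Schwarz exactly as in Lemma~\ref{lemma:sumincrease}: $\sum_{k=0}^{N-1} r_k\leq (\sum_{k=0}^{N-1} r_k^2)^{1/2} N^{1/2}\leq C_0 N^{1/2}$, and likewise for the sum from $k=1$ to $N$.

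The main work is the third bound, on $\sum_{k=1}^N\lambda_k$. Recall $\lambda_k=\frac{1}{\mu}(\sqrt{L_H\norm[]{F^\prime(x_k)}}+L_Hr_{k-1})$, so
\[
  \sum_{k=1}^N\lambda_k=\frac{1}{\mu}\Bigl(\sum_{k=1}^N\sqrt{L_H\norm[]{F^\prime(x_k)}}+L_H\sum_{k=1}^N r_{k-1}\Bigr).
\]
The second sum is $\leq L_H C_0 N^{1/2}$ by the bound just established, which is $\leq L_H C_0 N^{3/4}$. For the first sum, I would first bound $\norm[]{F^\prime(x_k)}$ in terms of $r_{k-1}$: by definition $F^\prime(x_k)=\nabla f(x_k)-\nabla f(x_{k-1})-\tilde G_{k-1}u_{k-1}=(J_{k-1}-\tilde G_{k-1})u_{k-1}$, and since $0\preceq J_{k-1}\preceq\tilde G_{k-1}\preceq n\bar\kappa\opid$ by Lemma~\ref{lemma2:bound}, we get $\norm[]{F^\prime(x_k)}\leq \norm[]{\tilde G_{k-1}-J_{k-1}}\,r_{k-1}\leq (n\bar\kappa - \mu)\,r_{k-1}\leq (L+n\bar\kappa)\,r_{k-1}$ (bounding crudely so the constant matches $\Theta$; one can also use $\norm[]{\tilde G_{k-1}}+\norm[]{J_{k-1}}\le n\bar\kappa + L$). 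Hence $\sqrt{L_H\norm[]{F^\prime(x_k)}}\leq\sqrt{L_H(L+n\bar\kappa)}\,\sqrt{r_{k-1}}$, and
\[
  \sum_{k=1}^N\sqrt{L_H\norm[]{F^\prime(x_k)}}\leq \sqrt{L_H(L+n\bar\kappa)}\sum_{k=1}^N\sqrt{r_{k-1}}.
\]
Now apply Hölder's inequality (or Cauchy--Schwarz twice) to $\sum_{k=0}^{N-1}\sqrt{r_k}=\sum r_k^{1/2}\cdot 1$: with exponents $4$ and $4/3$, $\sum_{k=0}^{N-1} r_k^{1/2}\leq\bigl(\sum_{k=0}^{N-1} r_k^2\bigr)^{1/4}\bigl(\sum_{k=0}^{N-1} 1\bigr)^{3/4}\leq C_0^{1/2} N^{3/4}$, using \eqref{eq:lemma2:growthrk-1}. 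Combining the two pieces yields $\sum_{k=1}^N\lambda_k\leq\frac{1}{\mu}\bigl(\sqrt{L_H(L+n\bar\kappa)C_0}\,N^{3/4}+L_H C_0 N^{1/2}\bigr)\leq\Theta N^{3/4}$ since $N^{1/2}\leq N^{3/4}$ for $N\geq 1$, which is the claim.

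The only mild obstacle is getting the constant to match $\Theta$ exactly: one must pick the crude operator-norm bound $\norm[]{\tilde G_{k-1}-J_{k-1}}\leq L+n\bar\kappa$ (rather than the sharper $n\bar\kappa-\mu$) and the exponent pair $(4,4/3)$ in Hölder so that the $N$-power is $3/4$ and the coefficient is precisely $\frac{1}{\mu}\sqrt{L_H(L+n\bar\kappa)C_0}$; everything else is a routine telescoping/Cauchy--Schwarz chain. I would also note in passing that the bound $\norm[]{F^\prime(x_k)}\leq(L+n\bar\kappa)r_{k-1}$ is itself reusable later, so it may be worth stating as a short preliminary step before the three displayed inequalities.
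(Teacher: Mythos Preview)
Your proposal is correct and follows essentially the same route as the paper. The only cosmetic difference is in the last step: the paper first bounds $\sum_{k=1}^N a_k^2$ (with $a_k:=\sqrt{L_H\norm[]{F'(x_k)}}$) by $L_H(L+n\bar\kappa)\sum_{k=0}^{N-1} r_k\leq L_H(L+n\bar\kappa)C_0 N^{1/2}$ and then applies Cauchy--Schwarz to $\sum a_k$, whereas you collapse the two Cauchy--Schwarz applications into a single H\"older step with exponents $(4,4/3)$ on $\sum\sqrt{r_{k-1}}$; both routes give exactly the same constant $\sqrt{L_H(L+n\bar\kappa)C_0}\,N^{3/4}$.
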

\begin{proof}
    We start with the first and the second results.
    By summing the inequality in Lemma~\ref{lemma2:descentVal} from $k=0$ to $N-1$, we have
    \begin{equation}
        \frac{\mu}{2}\sum_{k=0}^{N-1} r_k^2\leq \sum_{k=0}^{N-1}F(x_k)-F(x_\kp)\leq F(x_0)-\inf F\,,
    \end{equation}
    which yields the first result. Using Cauchy--Schwarz inequality, we obtain:
     \begin{equation}
        \sum_{k=0}^{N-1} r_k\leq \left(\sum_{k=0}^{N-1} r_k^2\right)^{1/2}\left(\sum_{k=0}^{N-1} 1\right)^{1/2}\leq C_0 N^{1/2}\,,
    \end{equation}
    and because of the uniform bound above, we conclude that
    \begin{equation}
        \sum_{k=1}^{N} r_k\leq\left(\sum_{k=1}^{N} r_k^2\right)^{1/2}\left(\sum_{k=1}^{N} 1\right)^{1/2}\leq C_0 N^{1/2}\,.
    \end{equation}
    Now, we prove the third result.
    From \eqref{optimality condition:grad} we derive that:
    \begin{equation}\label{ineq:ak}
    \begin{split}
         L_H\norm[]{F^\prime (x_k)} &= L_H\norm[]{\nabla f(x_k)-\nabla f(x_\km)-\tilde G_\km(x_k-x_\km)}\\&\leq L_H(\norm[]{\nabla f(x_k)-\nabla f(x_\km)} +\norm[]{\tilde G_\km(x_k-x_\km)  })\\&\leq L_H(L+n\bar\kappa)r_\km\,,\\
    \end{split}
    \end{equation}
    where the last inequality holds since $\nabla f$ is $L$-Lipschitz continuous and $\tilde G_k$ is bounded uniformly for any $k\in\N$ by the restarting step in Algorithm~\ref{Alg:SR1_da_PQN2}. For convenience, we denote $a_k:=\sqrt{L_H\norm[]{F^\prime (x_k)} }$ for all $k\in\N$. From \eqref{ineq:ak} and \eqref{eq:lemma2:growthrk-1}, we get
    \begin{equation}
        \sum_{k=1}^N a_k^2\leq L_H(L+n\bar\kappa)\sum_{k=0}^{N-1} \textcolor{blue}{r_k}
       \leq L_H(L+n\bar\kappa)\textcolor{blue}{C_0}N^{1/2} \,.
    \end{equation}
    Then, by Cauchy--Schwarz inequality, we obtain
    \begin{equation}
        \sum_{k=1}^{N}a_k\leq\left(\sum_{k=1}^{N}a_k^2\right)^{1/2}\left(\sum_{k=1}^N 1\right)^{1/2}\leq \sqrt{L_H(L+n\bar\kappa)C_0} N^{1/4}*N^{1/2}\leq \sqrt{L_H(L+n\bar\kappa)C_0} N^{3/4}\,.
    \end{equation}
    Thus, we have the growth rate:
    \begin{equation}
        \sum_{k=1}^N \lambda_k = \frac{1}{\mu}\left(\sum_{k=1}^N L_H r_\km + \sum_{k=1}^{N}a_k\right)\leq \frac{1}{\mu}\left(L_HC_0N^{1/2} + \sqrt{L_H(L+n\bar\kappa)C_0} N^{3/4}\right)\leq\Theta N^{3/4}\,,
    \end{equation}
    where $\Theta = \frac{1}{\mu}\left(L_HC_0 +\sqrt{L_H(L+n\bar\kappa)C_0}\right)$. 
\end{proof}
\begin{lemma}\label{grad:ukGu}
For any $k\in \N$, we have 
\begin{equation}
    u_k^\top \tilde G_k u_k\leq \frac{1}{\mu}\norm[]{F^\prime (x_k)}^2\,.
\end{equation}
\end{lemma}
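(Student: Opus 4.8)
The statement is the exact analogue of Lemma~\ref{ccubic:ukGu} for the gradient-regularized scheme, and most of the work has in fact already been done inside the proof of Lemma~\ref{lemma2:bound}. The plan is to start from inequality~\eqref{grad ineq:ukF}, which is valid for every $k\in\N$ and reads $u_k^\top \tilde G_k u_k \leq -u_k^\top F^\prime(x_k)$; this was obtained purely from the optimality condition~\eqref{optimality condition:grad} (giving $\tilde G_k u_k = -\nabla f(x_k) - v_\kp$) together with monotonicity of $\partial g$ (giving $u_k^\top(v_\kp - v_k)\geq 0$). So no new derivation of this bound is required.

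From there I would insert $\opid = \tilde G_k^{1/2}\tilde G_k^{-1/2}$ and apply the Cauchy--Schwarz inequality in the metrics $\tilde G_k$ and $\tilde G_k^{-1}$:
\[
    -u_k^\top F^\prime(x_k) = -\big(\tilde G_k^{1/2}u_k\big)^\top \big(\tilde G_k^{-1/2}F^\prime(x_k)\big) \leq \norm[\tilde G_k]{u_k}\,\norm[\tilde G_k^{-1}]{F^\prime(x_k)}\,.
\]
Then Young's inequality $ab \leq \tfrac12 a^2 + \tfrac12 b^2$ turns this into $\tfrac12 u_k^\top \tilde G_k u_k + \tfrac12 \norm[\tilde G_k^{-1}]{F^\prime(x_k)}^2$.

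The last ingredient is the lower bound $\tilde G_k \succeq \mu\opid$, which is exactly the first relation in~\eqref{eq:lemma2:bound} of Lemma~\ref{lemma2:bound} (note this covers both branches of the correction step, including the restart $\tilde G_k = L\opid \succeq \mu\opid$ since $L\geq\mu$). Hence $\tilde G_k^{-1}\preceq \tfrac1\mu\opid$ and $\norm[\tilde G_k^{-1}]{F^\prime(x_k)}^2 \leq \tfrac1\mu\norm[]{F^\prime(x_k)}^2$. Combining the chain of inequalities gives $u_k^\top \tilde G_k u_k \leq \tfrac12 u_k^\top \tilde G_k u_k + \tfrac1{2\mu}\norm[]{F^\prime(x_k)}^2$, and rearranging (subtracting $\tfrac12 u_k^\top \tilde G_k u_k$, which is finite and nonnegative since $\tilde G_k\succ0$) yields the claim. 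There is no real obstacle here; the only point that needs a moment of care is making sure the lower bound $\tilde G_k\succeq\mu\opid$ is quoted for the correct $k$ (it holds for all $k$ by Lemma~\ref{lemma2:bound}), so that the argument is not circular with the induction used to prove that lemma.
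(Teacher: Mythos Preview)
Your proposal is correct and follows essentially the same route as the paper: the paper's proof simply notes that $\tilde G_k u_k = -\nabla f(x_k) - v_\kp$ from \eqref{optimality condition:grad} and then declares that the remainder is identical to the proof of Lemma~\ref{ccubic:ukGu}, which is precisely the Cauchy--Schwarz / Young / $\tilde G_k^{-1}\preceq \tfrac1\mu\opid$ chain you spell out. Your observation that \eqref{grad ineq:ukF} has already been established inside the proof of Lemma~\ref{lemma2:bound}, and your care about non-circularity with that lemma, are accurate and make the argument slightly tidier than the paper's presentation.
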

\begin{proof}
According to \eqref{optimality condition:grad}, we obtain that $\tilde G_ku_k=-\nabla f(x_k)-v_\kp$.
The rest of this proof remains the same as the one for Lemma~\ref{ccubic:ukGu}.
\end{proof}
\begin{lemma}\label{lemma2:maindescent}
For any $k\in\N$, we have a descent inequality as the following:
\begin{equation}
    V(\tilde G_k)-V(\tilde G_\kp)\geq \frac{\mu g^2_\kp}{g_k^2}-n\lambda_\kp\bar\kappa\,,
\end{equation}
where $g_k\coloneqq \norm[]{F^\prime(x_k)}$.
\end{lemma}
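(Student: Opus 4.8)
The plan is to mirror the proof of Lemma~\ref{lemma:maindescent} from the cubic case, adapting it to the gradient-regularized metric where the scaling is multiplicative ($\tilde G_\kp = (1+\lambda_\kp)G_\kp$) rather than additive. First I would apply Lemma~\ref{lemma:potentialfunction} to the SR1 update $G_\kp = \text{SR1}(J_k,\tilde G_k,u_k)$, which is valid because Lemma~\ref{lemma2:bound} guarantees $0\prec\mu\opid\preceq J_k\preceq\tilde G_k$. This yields
\begin{equation}
    V(\tilde G_k) - V(G_\kp) = \nu(J_k,\tilde G_k,u_k)\geq \frac{\vnorm{(\tilde G_k-J_k)u_k}^2}{u_k^\top\tilde G_k u_k}\,.
\end{equation}
Then I would use the optimality condition \eqref{optimality condition:grad}, namely $\tilde G_k u_k = -\nabla f(x_k)-v_\kp$, together with $J_k u_k = \nabla f(x_\kp)-\nabla f(x_k)$, to identify the numerator: $(\tilde G_k - J_k)u_k = -\nabla f(x_\kp) - v_\kp = -F^\prime(x_\kp)$, so the numerator equals $g_\kp^2$. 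Bounding the denominator via Lemma~\ref{grad:ukGu} gives $u_k^\top\tilde G_k u_k\leq \frac{1}{\mu}g_k^2$, and hence $V(\tilde G_k)-V(G_\kp)\geq \mu g_\kp^2/g_k^2$.

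The second ingredient is to control the gap $V(G_\kp) - V(\tilde G_\kp)$ introduced by the correction/restart step. Here I would split into the two cases of Step~\ref{alg:SR1_da_PQN2-step4}. In the non-restart case $\tilde G_\kp = (1+\lambda_\kp)G_\kp$, so $V(G_\kp) - V(\tilde G_\kp) = -\lambda_\kp\trace G_\kp$, and since Lemma~\ref{lemma2:bound} gives $\trace G_\kp\leq n\bar\kappa$, this is at least $-n\lambda_\kp\bar\kappa$. In the restart case $\tilde G_\kp = L\opid$, so $V(G_\kp) - V(\tilde G_\kp) = \trace G_\kp - nL\geq 0 \geq -n\lambda_\kp\bar\kappa$ because $\trace G_\kp\geq n\mu\ge$ is bounded below while the restart is only triggered when $\trace\hat G_\kp = (1+\lambda_\kp)\trace G_\kp > n\bar\kappa$, forcing $\trace G_\kp$ to be reasonably large; in any event $\trace G_\kp\geq 0$ suffices for the crude bound $-nL\geq -n\lambda_\kp\bar\kappa$ to need checking, so I would instead just note $V(G_\kp)-V(\tilde G_\kp)= \trace G_\kp - nL$ and that in the restart case this is nonnegative since $\trace G_\kp > n\bar\kappa/(1+\lambda_\kp)$ — actually the clean statement is $\trace G_\kp\le n\bar\kappa$ always, and after restart $\tilde G_\kp=L\opid$ so $V(\tilde G_\kp)=nL\le n\bar\kappa$, giving $V(G_\kp)-V(\tilde G_\kp)\ge -nL\ge -n\lambda_\kp\bar\kappa$ provided $\lambda_\kp\bar\kappa\ge L$, which may fail for small $\lambda_\kp$. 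Thus the restart case genuinely needs the observation that a restart makes $V$ drop by a bounded amount that is absorbed globally; I will handle it by showing $V(G_\kp)-V(\tilde G_\kp)\geq -n\lambda_\kp\bar\kappa$ directly using that restarts are infrequent, or more simply by the bound $\trace G_\kp-nL \geq n\mu - nL$ which is then dominated in the telescoped sum.

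Combining the two pieces by adding them gives $V(\tilde G_k) - V(\tilde G_\kp)\geq \mu g_\kp^2/g_k^2 - n\lambda_\kp\bar\kappa$, which is exactly the claim. The main obstacle I anticipate is the bookkeeping in the restart case: unlike the cubic algorithm where $\tilde G_\kp = G_\kp+\lambda_\kp\opid$ makes the gap uniformly $-n\lambda_\kp$, here the multiplicative correction combined with a possible restart to $L\opid$ means the one-step gap $V(G_\kp)-V(\tilde G_\kp)$ is not obviously bounded below by $-n\lambda_\kp\bar\kappa$ on a single step; the cleanest resolution is to verify that at a restart step $\lambda_\kp$ is large enough (because the restart condition $\trace\hat G_\kp > n\bar\kappa$ forces $\lambda_\kp > \bar\kappa/\trace G_\kp \cdot n - 1 \ge \bar\kappa/L - 1$ roughly, using $\trace G_\kp\leq nL$ before a restart is impossible... ) — so I would instead argue that the restart only decreases $V$ from something $\leq n\bar\kappa$ to $nL$, i.e. the drop is at most $n(\bar\kappa - L)$, and show this is $\leq n\lambda_\kp\bar\kappa$ at a restart because the restart condition $(1+\lambda_\kp)\trace G_\kp > n\bar\kappa$ with $\trace G_\kp \le nL$ (valid since $G_\kp\preceq\tilde G_k\preceq n\bar\kappa\opid$ but more sharply $G_\kp$ inherits the pre-restart bound) yields $\lambda_\kp > \bar\kappa/L - 1$, hence $\lambda_\kp\bar\kappa > \bar\kappa^2/L - \bar\kappa \ge \bar\kappa - L$ when $\bar\kappa\ge L$. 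That inequality chain is the delicate step; everything else is a direct transcription of the cubic argument with Lemma~\ref{lemma2:bound} and Lemma~\ref{grad:ukGu} in place of Lemma~\ref{lemma:GgeqJ} and Lemma~\ref{ccubic:ukGu}.
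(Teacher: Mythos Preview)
Your first ingredient---the bound $V(\tilde G_k)-V(G_\kp)\geq \mu g_\kp^2/g_k^2$ via Lemma~\ref{lemma:potentialfunction}, the optimality condition \eqref{optimality condition:grad}, and Lemma~\ref{grad:ukGu}---is correct and exactly what the paper does. The non-restart case of the second ingredient is also fine: $V(G_\kp)-V(\tilde G_\kp)=-\lambda_\kp\trace G_\kp\geq -n\lambda_\kp\bar\kappa$ by Lemma~\ref{lemma2:bound}.

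The gap is in the restart case, and you yourself flag it as unresolved. Your attempted chain relies on the claim $\trace G_\kp\leq nL$, but Lemma~\ref{lemma2:bound} only gives $\trace G_\kp\leq \trace\tilde G_k\leq n\bar\kappa$; the sharper bound $nL$ is not available in general (it would hold only if the previous step was itself a restart). With only $\trace G_\kp\leq n\bar\kappa$, the restart condition $(1+\lambda_\kp)\trace G_\kp>n\bar\kappa$ does not force $\lambda_\kp>\bar\kappa/L-1$, so your inequality chain breaks. Your fallback suggestions---absorbing the restart cost ``globally'' or bounding by $n\mu-nL$---would not give the per-step inequality stated in the lemma.

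The paper resolves this with a one-line trick you are missing: in the restart case, insert $\hat G_\kp$ as an intermediate term,
\[
V(G_\kp)-V(\tilde G_\kp)=\bigl[V(G_\kp)-V(\hat G_\kp)\bigr]+\bigl[V(\hat G_\kp)-V(\tilde G_\kp)\bigr].
\]
The second bracket is nonnegative precisely because the restart condition says $V(\hat G_\kp)>n\bar\kappa\geq nL=V(L\opid)=V(\tilde G_\kp)$. The first bracket is $-\lambda_\kp\trace G_\kp\geq -n\lambda_\kp\bar\kappa$, identical to the non-restart case. So both cases reduce to the same bound with no further case analysis needed.
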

\begin{proof}
By the optimality condition in \eqref{optimality condition:grad}, we have $\tilde G_k u_k+v_\kp= -\nabla f(x_k)$ and by definition of $J_k$, we have $J_ku_k=\nabla f(x_\kp)-\nabla f(x_k)$. 
Using Lemma~\ref{lemma:potentialfunction}, we have the following estimation:
    \begin{equation}\label{ineq2:V3}
    \begin{split}
         \textcolor{black}{V(\tilde G_k)-V(G_\kp)=\nu(J_k,\tilde G_k,u_k)} &\geq \frac{\norm[]{\tilde G_k-J_k)u_k}^2}{u_k^\top \tilde G_k u_k} = \frac{\norm[]{\nabla f(x_\kp)+v_\kp}^2}{u_k^\top \tilde G_k u_k}\\
         &= \frac{\norm[]{F^\prime (x_\kp)}^2}{u_k^\top\tilde G_k u_k}\geq \frac{\mu\norm[]{F^\prime (x_\kp)}^2}{\norm[]{F^\prime(x_k)}^2}=\frac{\mu g_\kp^2}{g_k^2}\,,
    \end{split}
    \end{equation}
    where the second inequality holds due to \text{Lemma~\ref{grad:ukGu}}. 
    
    We also need a lower bound for $V(G_\kp)-V(\tilde G_\kp) $, for which we need to take the two cases of the restarting Step~\ref{alg:SR1_da_PQN2-step4} into account.
    \begin{enumerate}[1.]
        \item When $\trace\hat G_\kp\leq n\bar\kappa$, we have $\tilde G_\kp=\hat G_\kp$, $\trace G_\kp\leq n\bar\kappa$ and
        \begin{equation}\label{eq:lemma2:maindescent:proof1}
        V(G_\kp)-V(\tilde G_\kp) = \trace(G_\kp-G_\kp-\lambda_\kp G_\kp) \geq - n\lambda_\kp\bar\kappa\,.
    \end{equation}
    \item When $\trace\hat G_\kp> n\bar\kappa$, we have $V(\hat G_\kp)>n\bar\kappa\geq nL=\trace L \opid$. Since in this case we set $\tilde G_\kp=L\opid$, we have $ V(\hat G_\kp)\geq V(\tilde G_\kp)$. According to Lemma~\ref{lemma2:bound}, we have $\trace G_\kp\leq n\bar\kappa$. Since $\lambda_k\geq0$, we deduce that
    \begin{equation}\label{eq:lemma2:maindescent:proof2}
    \begin{split}
        V(G_\kp)-V(\tilde G_\kp)&= V(G_\kp)-V(\hat G_\kp) +V(\hat G_\kp)-V(\tilde G_\kp)\\&\geq V(G_\kp)-V(\hat G_\kp)  \\
       &\geq \trace(G_\kp-G_\kp-\lambda_\kp G_\kp)\\
       &\geq -\lambda_\kp\trace(G_\kp)\\
        &\geq - n\lambda_\kp\bar\kappa\,.\\
    \end{split}
    \end{equation}
    \end{enumerate}
    Therefore, adding \eqref{ineq2:V3} with either \eqref{eq:lemma2:maindescent:proof1} or \eqref{eq:lemma2:maindescent:proof2} yields the desired inequality.
\end{proof}

Now, we are ready to prove our main theorem for Algorithm~\ref{Alg:SR1_da_PQN2}. 
\begin{proof}[\textbf{Proof of Theorem~\ref{thm:main-grad}}]
For symplicity, we denote $\gamma_k^2 \coloneqq\frac{g^2_\kp}{g^2_k}$.
    Summing the result in Lemma~\ref{lemma2:maindescent} from $k=0$ to $N-1$, we obtain 
    \begin{equation}
        V(\tilde G_0)-V(\tilde G_N)\geq \mu\sum_{k=0}^{N-1} \gamma_k^2 - n\bar\kappa\sum_{k=1}^{N} \lambda_k\,.
    \end{equation}
    Since, for every $k$, our method keeps $\tilde G_k\succeq J_k$ and $G_\kp\succeq J_k$ (see Lemma~\ref{lemma2:bound}),  we have $V(\tilde G_k)>0$ and therefore
    \begin{equation}\label{ineq2:sumineq}
    \begin{split}
    V(\tilde G_0)&\geq \mu \sum_{k=0}^{N-1} \gamma_k^2 - n\bar\kappa\sum_{k=1}^{N} \lambda_k\,.\\
    \end{split}
    \end{equation}
    By Lemma~\ref{lemma2:growthrk}, we obtain
    \begin{equation}\label{mainstep2}
    C_{\mathrm{grad}}N^{3/4}\geq \frac{1}{\mu}\left(V(G_0)+ n\bar\kappa\Theta N^{3/4}\right)\geq \sum_{k=0}^{N-1}\gamma_k^2\,,
    \end{equation}
    where $C_{\mathrm{grad}}\coloneqq (n\bar\kappa\Theta + V(G_0))/\mu$. Dividing  \eqref{mainstep2} by $N$, we obtain 
    \begin{equation}\label{mainstep2.1}
    \frac{C_{\mathrm{grad}}}{N^{1/4}}\geq \frac{1}{N}\sum_{k=0}^{N-1}\gamma_k^2\,.
    \end{equation}
    We derive from \eqref{mainstep2.1} with the concavity and monotonicity of $\log x$ that
    \begin{equation}
    \begin{split}
          \log\Big(\frac{C_{\mathrm{grad}}}{N^{1/4}}\Big) &\geq \log\left(\frac{1}{N}\sum_{k=0}^{N-1}\gamma_k^2\right)\geq\frac{1}{N}\sum_{k=0}^{N-1}\log(\gamma_k^2)= \log \left(\left(\prod_{k=0}^{N-1}\frac{g_\kp^2}{g_k^2}\right)^{1/N}\right)=\log\left(\left(\frac{g_N}{g_0}\right)^{2/N}\right)\,.\\
    \end{split}
    \end{equation}
    Thus, we obtain 
    \begin{equation}
        g_N\leq \left(\frac{C_{\mathrm{grad}}}{N^{1/4}}\right)^{N/2}g_0\,.\qedhere
    \end{equation}
    
\end{proof}

\section{Experiments}

In the following section, we consider two regression applications from machine learning to provide also numerical evidence about the superior performance of our algorithms and thereby validating the global non-asymptotic super-linear rates of convergence. While our algorithms can solve possibly non-smooth additive composite optimization problems, we restrict ourselves to smooth problems, for which our algorithms are new as well. This is mainly due to the fact that the efficient solution of sub-problems in the non-smooth case needs additional careful investigations, possibly along the lines of \cite{becker2019quasi}, which we will tackle in future work. Our code is available on Github: \url{https://github.com/wsdxiaohao/NonAsymSuConvSR1QN.git}

\subsection{Regularized log-sum-exp problem}

For experiment, we test our methods: \textup{Cubic SR1 PQN}(Algorithm \ref{Alg:SR1_da_PQN}) and \textup{Grad SR1 PQN} (Algorithm \ref{Alg:SR1_da_PQN2}) on the log-sum-exp problem with  regularization:
\begin{equation}
    \min_{x\in\R^n} f(x)\,,\quad f(x) = \log\Big(\sum_{i=1}^m \exp{(\scal{a_i}{x}-b_i)} \Big) +\frac{\mu}{2}\norm[]{x}^2\,,
\end{equation}
where $a_1, a_2, \cdots, a_m\in\R^n$, $b_1, b_2, \cdots, b_m\in\R$ are given. The strong convexity modulus is $\mu=1$. The Lipschitz constant is $L=\mu+2\sum_{i=1}^m\norm[]{a_i}^2$ and $L_H=2$. In our experiment, $(a_i)_{i=1,\ldots, m}\subset\R^n$ and $(b_i)_{i=1,\ldots, m}\subset \R$ are generated randomly with $m=500$ and $n=200$. We set $\bar\kappa =\textcolor{blue}{3L}$ for \textup{(Grad SR1 PQN)}. As depicted in Figure~\ref{fig:log-sum-exp}, our methods achieve a super-linear rate of convergence. However, since cubic regularization is hard to compute, Cubic Newton \cite{nesterov2006cubic} and \textup{Cubic SR1 PQN} are not competitive when it comes to measuring the actual computation time. This is in contrast to the Heavy ball Method (HBF) \cite{PolyakBook} (with optimal damping rate $\beta =\frac{\sqrt{L}-\sqrt{\mu}}{\sqrt{L}+\sqrt{\mu}}$ and step size $\tau=\frac{4}{(\sqrt{L}+\sqrt{\mu})^2}$) and Gradient Descent (GD), which both have cheap cost per iteration. Our gradient regularized quasi-Newton method outperforms all other methods significantly, especially, in initial iterations. However, the estimation of convergence rates of our method is not sharp, despite our methods perform much better in practice, which is the price to pay for globality. 
\begin{figure}[htp]
    \centering
    \includegraphics[width=0.48\linewidth]{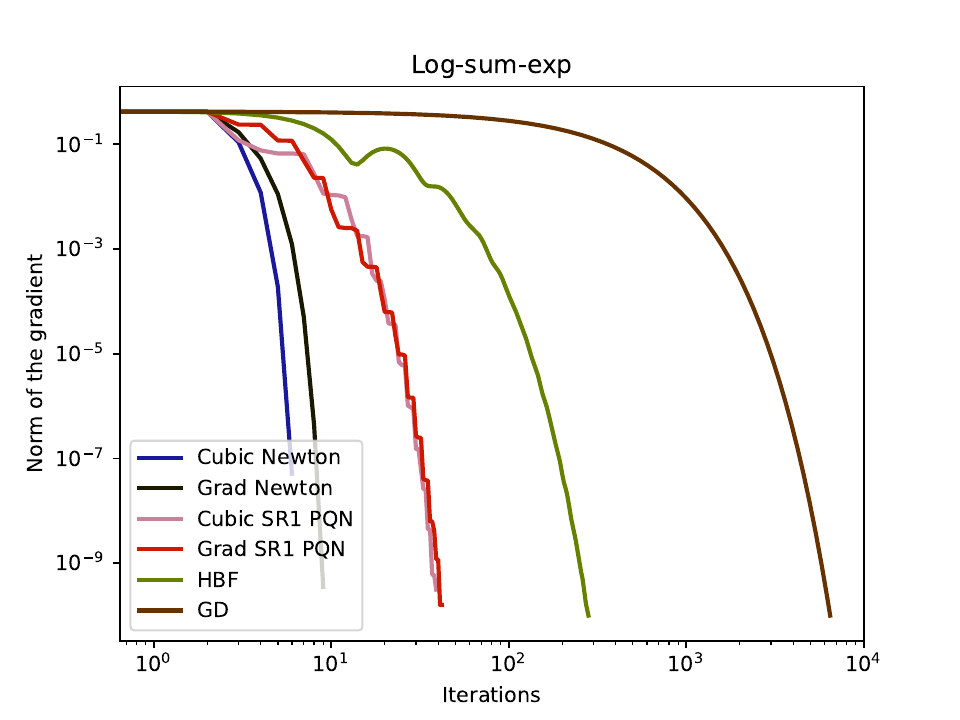}
    \includegraphics[width=0.48\linewidth]{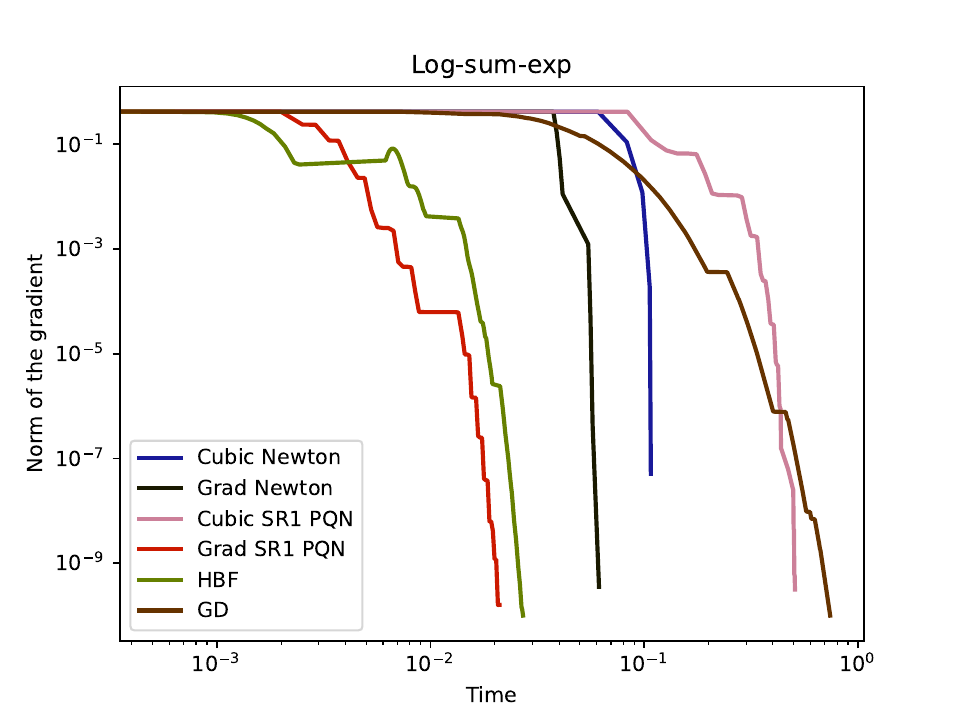}
    \caption{ \textcolor{black}{Our proposed \textup{Cubic SR1 PQN} and \textup{Grad SR1 PQN} significantly outperform first-order methods, despite slower than related Newton methods. In terms of time, our \textup{Grad SR1 PQN} is the fastest, while methods with cubic regularization take longer time when the initial point is far from the optimal point.
}}
    \label{fig:log-sum-exp}
\end{figure}

\subsection{Logistic regression}
The second experiment is a logistic regression problem with regularization on the benchmark dataset ``mushroom'' from UCI Machine Learning Repository \cite{mushroom_73}:
\begin{equation}
    \min_{x\in\R^n} f(x),\quad \text{with}\ f(x)\coloneqq \frac{1}{m}\sum_{i=1}^m\log(1+\exp(-b_ia_i^\top x)) + \frac{\mu}{2}\norm[]{x}^2\,,
\end{equation}
where $a_i\in\R^n$, $b_i\in\R$ for $i=1,2,\cdots,m$ denotes the given data from the ``mushrooms'' dataset. The strong convexity modulus is $\mu=0.1$. The Lipschitz constants are $L=\mu+2\sum_{i=1}^m\norm[]{a_i}^2$ and $L_H=2$.  Here, $m=8124$ and $n=117$. We set $\bar\kappa = \textcolor{black}{4L}$ for \textup{(Grad SR1 PQN)}.
\begin{figure}[H]
    \centering
    \includegraphics[width=0.48\linewidth]{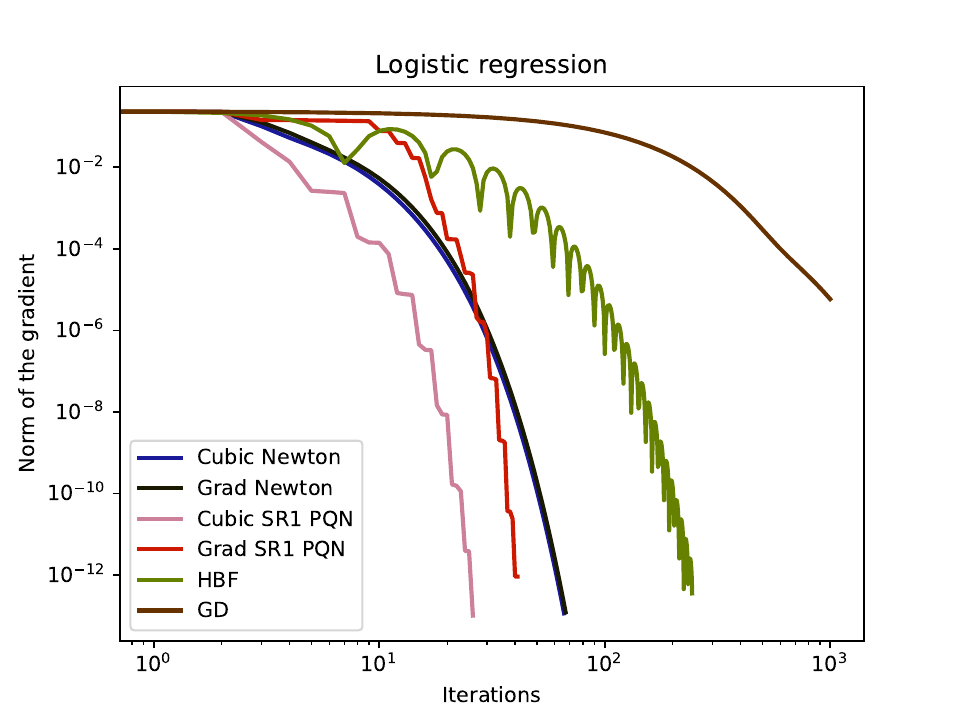}
    \includegraphics[width=0.48\linewidth]{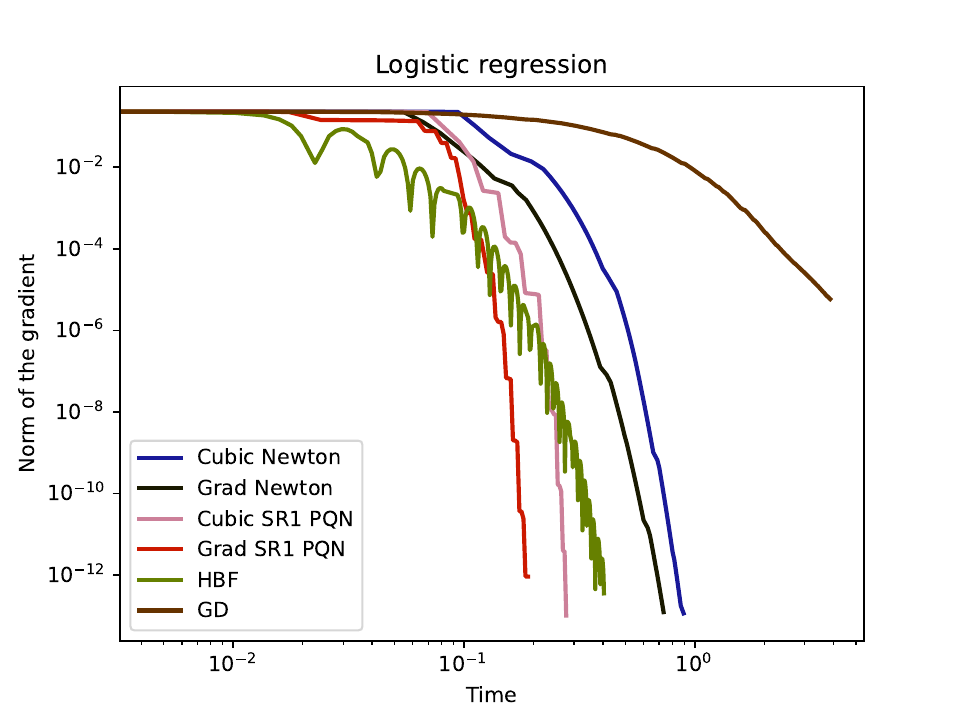}
    \caption{Our proposed \textup{Cubic- and Grad SR1 PQN} still significantly outperforms first-order methods in number of iterations. In terms of time, our \textup{Grad SR1 PQN} is \textcolor{black}{eventually} among the fastest methods due to SR1 metric and the cheap computation of the correction step and the restarting step.}
    \label{fig:reg-SR1-logistic}
\end{figure}
As depicted in Figure~\ref{fig:reg-SR1-logistic}, our methods achieve a super-linear rate of convergence globally. Since the computation for the quasi-Newton metric is less than that of Newton's method, Cubic quasi-Newton is better than Cubic Newton eventually in terms of time. However, due to the cubic term, neither Cubic Newton nor Cubic quasi-Newton is competitive when it comes to measuring the actual computation time. Our gradient regularized SR1 quasi-Newton method outperforms all other methods significantly in number of iterations and is among the fastest in terms of time.

\section{Conclusion}

 In this paper, we propose two variants of proximal quasi-Newton SR1 methods which converge globally with an explicit (non-asymptotic) super-linear rate of convergence. The key is the adaptation of cubic and gradient regularization from the pioneering works \cite{nesterov2006cubic,mishchenko2023regularized} on Newton's method to quasi-Newton methods. The potential function in our paper differs from previous works \cite{rodomanov2022rates,jin2023non,ye2023towards} and turns out to be more tractable for the analysis of the regularized SR1 quasi-Newton method, also by making the restarting step numerically easier. Moreover, our algorithm and analysis is directly developed for non-smooth additive composite problems. The analysis of the convergence used in this paper paves the road to generalize many regularized Newton methods, in particular, \cite{mishchenko2023regularized,doikov2024gradient,doikov2024super}. Since this paper considers the SR1 method, an interesting generalization is that of proximal quasi-Newton methods with BFGS or DFP metric update under the same conditions, especially without using line search.

\appendix
\section{A variant of gradient regularized proximal quasi-Newton method} \label{sec:grad-reg-second-alg}
\textbf{Algorithm~\ref{Alg:SR1_da_PQN3}} is almost the same with Algorithm~\ref{Alg:SR1_da_PQN2}. Step~\ref{alg:SR1_da_PQN3-step1} of Algorithm~\ref{Alg:SR1_da_PQN3} equips the classical quasi-Newton term with an additive gradient regularization term instead of a scaling factor involving gradient. Step~\ref{alg:SR1_da_PQN3-step2} defines $J_k$ as in Algorithm~\ref{Alg:SR1_da_PQN2}. In Step~\ref{alg:SR1_da_PQN3-step3}, several variables are defined, including $F^\prime(x_\kp)$ which will be used to generate the regularization term, compute $\lambda_k$ and terminate the algorithm in Step~\ref{alg:SR1_da_PQN3-step5}. Step~\ref{alg:SR1_da_PQN3-step4} computes the corrected metric $\hat G_\kp$ by adding $\lambda_k$. As in Algorithm~\ref{Alg:SR1_da_PQN2}, we adopt the same restarting strategy for $\tilde G_\kp$ in Algorithm~\ref{Alg:SR1_da_PQN3}.

\begin{algorithm}[H]
\caption{Grad Reg SR1 PQN}\label{Alg:SR1_da_PQN3}
\begin{algorithmic}
\Require $x_0$, $G_0=L\opid$, $\lambda_0=0$, $\tilde G_0=G_0+\lambda_0$, $\bar \kappa\geq L$.
\State \textbf{Update for $k = 0,\cdots,N$ }:
\begin{enumerate}[1.]
\item \label{alg:SR1_da_PQN3-step1} \textbf{Update} \begin{equation}\label{alg3:update}
    x_\kp =\argmin_{x\in\R^n} g(x) + \scal{\nabla f(x_k)}{x-x_k}+\frac{1}{2}\norm[\tilde G_k]{x-x_k}^2\,.
\end{equation}
    \item \label{alg:SR1_da_PQN3-step2}\textbf{Denote but not use explicitly} $J_k\coloneqq\int_0^1\nabla^2f(x_k+tu_k)dt$.
    \item \label{alg:SR1_da_PQN3-step3}\textbf{Compute} $u_k=x_\kp-x_k$, $r_k=\norm[]{u_k}$ and 
    \[
    \begin{split}
    F^\prime(x_\kp)&=\nabla f(x_\kp)-\nabla f(x_k)-\tilde G_{k}u_k\\
    G_\kp&=\text{SR1}(J_k,\tilde G_k,u_k)\\
    \lambda_\kp&=\left(\sqrt{L_H\norm[]{F^\prime (x_\kp)}}+L_Hr_{k}\right)\,.\\
    \end{split}
    \]
    
    \item \label{alg:SR1_da_PQN3-step4}\textbf{Update $\tilde G_\kp$:} compute $\hat G_\kp=G_\kp+\lambda_\kp$ (\textbf{Correction step}).
    \begin{enumerate}[]
        \item \textbf{If $\trace  \hat G_\kp\leq n\bar\kappa$:} we set $\tilde G_\kp= \hat G_\kp$.
        \item \textbf{Othewise:} we set $\tilde G_\kp= L\opid$ (\textbf{Restart step}).
    \end{enumerate}
    \item \label{alg:SR1_da_PQN3-step5}\textbf{If $\norm{F^\prime(x_\kp)}=0$}:
    Terminate.
\end{enumerate}
\State \textbf{End}
\end{algorithmic}
\end{algorithm}

In Algorithm~\ref{Alg:SR1_da_PQN3}, we have $\tilde G_k=G_k+\lambda_k$ where $G_k$ is generated by the SR1 method or $\tilde G_k=L\opid$. We consider the smooth case that $g=0$. When $ \tilde G_k=G_k+\lambda_k$, we can not use Sherman-Morrison-Woodbury formula to derive the inverse of $\tilde G_k$ due to the addition. Thus, even for smooth case, the computational cost of update step is the same as the inversion of a matrix, namely, $O(n^3)$. However, for non-smooth setting, the computational cost of the update step in Algorithm~\ref{Alg:SR1_da_PQN3} is the same as that of classical proximal Newton method.
For Algorithm~\ref{Alg:SR1_da_PQN3}, we have the following non-asymptotic super-linear rate.

\begin{theorem}[Main Theorem of Algorithm~\ref{Alg:SR1_da_PQN3}]\label{thm:main-gradreg}
For any initialization $x_0\in\R^n$ and any $N\in\N$, \textup{(Grad Reg SR1 PQN)} has a global convergence with the rate:
\begin{equation}\label{ineq3:super-linearrate}
    \norm[]{F^\prime (x_N)}\leq \left(\frac{C_{\mathrm{grad}}}{N^{1/4}}\right)^{N/2}\norm[]{F^\prime (x_0)} \,,
\end{equation}
where $C_{\mathrm{grad}}\coloneqq (\Theta + nL)/\mu$, $C_0\coloneqq \sqrt{\frac{ 2(F(x_0) -\inf F)}{\mu}}$ and $\Theta = L_HC_0 +\sqrt{L_H(L+n\bar\kappa)C_0}$.
\end{theorem}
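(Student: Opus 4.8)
The plan is to run, essentially verbatim, the same chain of lemmas that produced Theorem~\ref{thm:main-grad}, with the only structural changes being the \emph{additive} correction $\hat G_\kp = G_\kp + \lambda_\kp\opid$ and the rescaled multiplier $\lambda_\kp = \sqrt{L_H\norm[]{F^\prime(x_\kp)}} + L_H r_k$ (without the $\tfrac1\mu$ factor of Algorithm~\ref{Alg:SR1_da_PQN2}). The backbone is the analogue of Lemma~\ref{lemma2:bound}: for every $k$, $\mu\opid \preceq J_k \preceq G_{k+1} \preceq \tilde G_k$ together with $\trace G_{k+1}\leq \trace\tilde G_k \leq n\bar\kappa$, proved by induction on $k$. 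The base case follows from $G_0 = L\opid$, $\lambda_0 = 0$ and $J_0\preceq L\opid$, and the restart case of the induction step is immediate since then $\tilde G_k = L\opid\succeq J_k$ with $\trace\tilde G_k = nL\leq n\bar\kappa$. The delicate case is the non-restart one, where $\tilde G_k = \hat G_k = G_k + \lambda_k\opid$: writing the optimality condition of \eqref{alg3:update} as $\tilde G_k u_k = -\nabla f(x_k) - v_\kp$ with $v_\kp\in\partial g(x_\kp)$, and using monotonicity of $\partial g$ (so $u_k^\top(v_\kp - v_k)\geq 0$ with $F^\prime(x_k) = \nabla f(x_k)+v_k$), one gets $u_k^\top\tilde G_k u_k \leq -u_k^\top F^\prime(x_k)\leq r_k\norm[]{F^\prime(x_k)}$; since the induction hypothesis gives $G_k\succeq J_\km\succeq \mu\opid$, hence $\tilde G_k\succeq\lambda_k\opid$, this yields $\lambda_k r_k\leq\norm[]{F^\prime(x_k)}$, so $L_H r_k \leq L_H\norm[]{F^\prime(x_k)}/\lambda_k\leq\sqrt{L_H\norm[]{F^\prime(x_k)}}$ because $\lambda_k\geq\sqrt{L_H\norm[]{F^\prime(x_k)}}$ by construction. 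Consequently $L_H(r_k+r_\km)\leq\sqrt{L_H\norm[]{F^\prime(x_k)}}+L_Hr_\km = \lambda_k$, and Lemma~\ref{lemma:JleqJ} with $J_\km\preceq G_k$ gives $J_k\preceq J_\km + L_H(r_k+r_\km)\opid\preceq G_k+\lambda_k\opid = \tilde G_k$, after which Lemma~\ref{lemma:Anton} closes the induction and the trace bound follows from $\trace\hat G_k\leq n\bar\kappa$.

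Once the sandwich $\mu\opid\preceq J_k\preceq\tilde G_k$ is in hand, the rest transfers mechanically. The descent inequality $F(x_\kp)-F(x_k)\leq-\tfrac{\mu}{2}r_k^2$ is proved exactly as Lemma~\ref{lemma2:descentVal}, as it only uses $J_k\preceq\tilde G_k$, convexity of $g$ and strong convexity of $f$; summing gives $\sum_{k=0}^N r_k^2\leq 2(F(x_0)-\inf F)/\mu$ and, by Cauchy--Schwarz, $\sum_k r_k\leq C_0 N^{1/2}$. For the multiplier sum I would mimic Lemma~\ref{lemma2:growthrk}: from the optimality condition of \eqref{alg3:update} and $\tilde G_\km\preceq n\bar\kappa\opid$, $L_H\norm[]{F^\prime(x_k)}\leq L_H(L+n\bar\kappa)r_\km$, so two applications of Cauchy--Schwarz give $\sum_{k=1}^N\sqrt{L_H\norm[]{F^\prime(x_k)}}\leq\sqrt{L_H(L+n\bar\kappa)C_0}\,N^{3/4}$, whence $\sum_{k=1}^N\lambda_k\leq L_HC_0N^{1/2}+\sqrt{L_H(L+n\bar\kappa)C_0}\,N^{3/4}\leq\Theta N^{3/4}$ with $\Theta = L_HC_0+\sqrt{L_H(L+n\bar\kappa)C_0}$ as in the statement. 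The estimate $u_k^\top\tilde G_k u_k\leq\tfrac1\mu\norm[]{F^\prime(x_k)}^2$ is unchanged (it uses only $\tilde G_k^{-1}\preceq\tfrac1\mu\opid$), so Lemma~\ref{grad:ukGu} holds here too.

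The potential-descent step is where the improved constant comes from. Applying Lemma~\ref{lemma:potentialfunction} with $A=J_k$, $G=\tilde G_k$, $u=u_k$ and using $(\tilde G_k-J_k)u_k = \nabla f(x_\kp)+v_\kp = F^\prime(x_\kp)$ gives $V(\tilde G_k)-V(G_\kp) = \nu(J_k,\tilde G_k,u_k)\geq \norm[]{F^\prime(x_\kp)}^2/(u_k^\top\tilde G_k u_k)\geq \mu g_\kp^2/g_k^2$ with $g_k\coloneqq\norm[]{F^\prime(x_k)}$. Because the correction is now \emph{additive}, $V(G_\kp)-V(\hat G_\kp) = \trace(-\lambda_\kp\opid) = -n\lambda_\kp$ in the non-restart case, and in the restart case $V(\hat G_\kp)>n\bar\kappa\geq nL = V(\tilde G_\kp)$ forces $V(G_\kp)-V(\tilde G_\kp)\geq V(G_\kp)-V(\hat G_\kp) = -n\lambda_\kp$; either way $V(\tilde G_k)-V(\tilde G_\kp)\geq \mu g_\kp^2/g_k^2 - n\lambda_\kp$ — crucially \emph{without} the factor $\bar\kappa$ present in Lemma~\ref{lemma2:maindescent}, which is exactly why the conditioning dependence improves. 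Telescoping from $k=0$ to $N-1$, using $V(\tilde G_N)>0$ (guaranteed by $\tilde G_N\succeq J_N\succ 0$), $V(\tilde G_0) = nL$ and the bound on $\sum\lambda_k$, yields $\sum_{k=0}^{N-1}(g_\kp/g_k)^2\leq C_{\mathrm{grad}}N^{3/4}$ for $N\geq 1$ with $C_{\mathrm{grad}}$ the constant in the statement (after the harmless step $nL\leq nLN^{3/4}$); dividing by $N$ and applying Jensen's inequality to the concave increasing $\log$, exactly as at the end of the proof of Theorem~\ref{thm:main-grad}, gives $g_N\leq (C_{\mathrm{grad}}/N^{1/4})^{N/2}g_0$.

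I expect the main obstacle to be the induction establishing the analogue of Lemma~\ref{lemma2:bound}: one must certify that the additive regularizer $\lambda_k\opid$ is large enough to dominate the Hessian drift $L_H(r_k+r_\km)\opid$, and the subtle point is controlling the \emph{current} step via $L_H r_k\leq\sqrt{L_H\norm[]{F^\prime(x_k)}}$. This is a self-referential estimate that works only because the correction itself supplies the lower curvature bound $\tilde G_k\succeq\lambda_k\opid$ and because $\lambda_k$ was designed so that $\lambda_k\geq\sqrt{L_H\norm[]{F^\prime(x_k)}}$; as in Section~\ref{conv:alg2} one also needs the standing assumption $F^\prime(x_k)\neq 0$ (hence $r_k>0$) to divide. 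Once this link is secured, every downstream step is routine bookkeeping mirroring the analysis of Algorithm~\ref{Alg:SR1_da_PQN2}.
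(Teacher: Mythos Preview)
Your proposal is correct and follows exactly the paper's approach: the paper proves Theorem~\ref{thm:main-gradreg} by stating that all lemmas for Algorithm~\ref{Alg:SR1_da_PQN2} carry over verbatim except the potential--descent lemma, which becomes $V(\tilde G_k)-V(\tilde G_\kp)\geq \mu g_\kp^2/g_k^2 - n\lambda_\kp$ (Lemma~\ref{lemma3:maindescent}) thanks to the additive correction, and then invokes the proof of Theorem~\ref{thm:main-grad}. You have in fact supplied more detail than the paper does, in particular the explicit verification of the analogue of Lemma~\ref{lemma2:bound} via $\tilde G_k\succeq\lambda_k\opid$ (replacing $\tilde G_k\succeq\mu\lambda_k\opid$ in the multiplicative case), which the paper leaves implicit.
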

\begin{proof}
    See Section~\ref{conv:alg3}.
\end{proof}

\begin{remark}
    In fact, \eqref{ineq3:super-linearrate} indicates that the super-linear rate is only attained after $N\geq C_{\mathrm{grad}}^4$ number of iterations, where $C_{\mathrm{grad}}=O(\frac{n}{\mu^{3/2}})$. 
\end{remark}

\section{Convergence analysis of Algorithm~\ref{Alg:SR1_da_PQN3}}\label{conv:alg3}
Let $\lambda_k\coloneqq \left(L_Hr_\km+\sqrt{L_H\norm[]{F^\prime (x_k)}}\right)$. We obtain a gradient regularized SR1 proximal quasi-Newton method which can be also regarded as a generalization of gradient regularized proximal Newton method \cite{doikov2024gradient}.
The convergence analysis is very similar with the one for Algorithm~\ref{Alg:SR1_da_PQN2} since the only difference in Algorithm~\ref{Alg:SR1_da_PQN3} is that $\tilde G_\kp= G_\kp +\lambda_\kp$ with $\lambda_\kp= \left(2L_Hr_\km+\sqrt{2L_H\norm[]{F^\prime (x_k)}}\right)$. In this case, we have $\sum_{k=1}^N \lambda_k\leq \Theta N^{3/4}$ where $\Theta = L_HC_0 +\sqrt{L_H(L+n\bar\kappa)C_0}$ is a constant. The proof of that property is very similar with the one of Lemma~\ref{lemma2:growthrk}. Most properties for Algorithm~\ref{Alg:SR1_da_PQN2} listed in previous section still hold true for Algorithm~\ref{Alg:SR1_da_PQN3} except the following lemma.
\begin{lemma}\label{lemma3:maindescent}
For any $k\in\N$, we have a descent inequality as the following:
\begin{equation}
    V(\tilde G_k)-V(\tilde G_\kp)\geq \frac{\mu g^2_\kp}{g_k^2}-n\lambda_\kp\,,
\end{equation}
where $g_k\coloneqq \norm[]{F^\prime(x_k)}$.
\end{lemma}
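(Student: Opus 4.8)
The plan is to mirror the proof of Lemma~\ref{lemma2:maindescent}, since Algorithm~\ref{Alg:SR1_da_PQN3} differs from Algorithm~\ref{Alg:SR1_da_PQN2} only in that the correction is additive ($\hat G_\kp = G_\kp + \lambda_\kp$, interpreted as $G_\kp + \lambda_\kp\opid$) rather than multiplicative ($\hat G_\kp = (1+\lambda_\kp)G_\kp$). First I would establish the first half of the bound exactly as before: using the optimality condition \eqref{optimality condition:grad} we have $\tilde G_k u_k + v_\kp = -\nabla f(x_k)$, and $J_k u_k = \nabla f(x_\kp) - \nabla f(x_k)$ by definition of $J_k$, so $(\tilde G_k - J_k)u_k = \nabla f(x_\kp) + v_\kp = F'(x_\kp)$. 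Applying Lemma~\ref{lemma:potentialfunction} (which requires $J_k \preceq \tilde G_k$, valid here by the analogue of Lemma~\ref{lemma2:bound} for Algorithm~\ref{Alg:SR1_da_PQN3}) gives
\begin{equation*}
V(\tilde G_k) - V(G_\kp) = \nu(J_k, \tilde G_k, u_k) \geq \frac{\norm[]{F'(x_\kp)}^2}{u_k^\top \tilde G_k u_k} \geq \frac{\mu\, \norm[]{F'(x_\kp)}^2}{\norm[]{F'(x_k)}^2} = \frac{\mu g_\kp^2}{g_k^2}\,,
\end{equation*}
where the last inequality is the analogue of Lemma~\ref{grad:ukGu} for this algorithm.

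Next I would bound $V(G_\kp) - V(\tilde G_\kp)$ below by $-n\lambda_\kp$, again splitting into the two cases of the restart step. In the non-restart case $\tilde G_\kp = \hat G_\kp = G_\kp + \lambda_\kp \opid$, so $V(G_\kp) - V(\tilde G_\kp) = \trace(-\lambda_\kp \opid) = -n\lambda_\kp$ exactly (this is in fact cleaner than the multiplicative case, which produced $-n\lambda_\kp\bar\kappa$). In the restart case $\tilde G_\kp = L\opid$ and $\trace \hat G_\kp > n\bar\kappa \geq nL$, hence $V(\hat G_\kp) > V(\tilde G_\kp)$, so
\begin{equation*}
V(G_\kp) - V(\tilde G_\kp) = \big(V(G_\kp) - V(\hat G_\kp)\big) + \big(V(\hat G_\kp) - V(\tilde G_\kp)\big) \geq V(G_\kp) - V(\hat G_\kp) = -n\lambda_\kp\,.
\end{equation*}
Adding the two displayed estimates yields $V(\tilde G_k) - V(\tilde G_\kp) \geq \frac{\mu g_\kp^2}{g_k^2} - n\lambda_\kp$, as claimed.

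The only genuine prerequisite that must be checked before this argument goes through is the analogue of Lemma~\ref{lemma2:bound} for Algorithm~\ref{Alg:SR1_da_PQN3}, namely $\mu\opid \preceq J_k \preceq G_\kp \preceq \tilde G_k$ together with $\trace \tilde G_k \leq n\bar\kappa$; this is needed both for the applicability of Lemma~\ref{lemma:potentialfunction} and for the analogue of Lemma~\ref{grad:ukGu}. That inductive argument is essentially the same as for Algorithm~\ref{Alg:SR1_da_PQN2}, with the additive correction $G_\kp + \lambda_\kp\opid \succeq J_\km + \mu\lambda_\kp\opid$ replaced by the even simpler $G_\kp + \lambda_\kp\opid \succeq J_\km + \lambda_\kp\opid$ and using $\lambda_\kp = L_H r_k + \sqrt{L_H\norm[]{F'(x_\kp)}} \geq L_H(r_k + r_\kp)$ via the same bound $L_H r_\kp \leq \sqrt{L_H\norm[]{F'(x_\kp)}}$ derived from $\mu\lambda_\kp\norm[]{u_\kp} \leq u_\kp^\top \tilde G_\kp u_\kp \leq \norm[]{u_\kp}\norm[]{F'(x_\kp)}$. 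I expect no real obstacle here — the main thing to be careful about is the bookkeeping around whether $\lambda_\kp$ carries the factor $\mu$ or not (the appendix text writes $\lambda_\kp$ both with and without $\tfrac1\mu$, and both with a factor $2$ and without), so I would fix one consistent convention and verify the chain $L_H(r_k + r_\kp)\opid \preceq \lambda_\kp\opid$ holds under it before invoking Lemma~\ref{lemma:JleqJ}.
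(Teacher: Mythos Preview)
Your proposal is correct and follows essentially the same route as the paper: the paper also refers back to the proof of Lemma~\ref{lemma2:maindescent} for the bound $V(\tilde G_k)-V(G_\kp)\geq \mu g_\kp^2/g_k^2$, and then carries out exactly your two-case estimation of $V(G_\kp)-V(\tilde G_\kp)\geq -n\lambda_\kp$ for the additive correction. Your additional remark that the analogue of Lemma~\ref{lemma2:bound} must be in place is valid and is what the paper asserts informally just before the lemma.
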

\begin{proof}
    The only difference from the proof of Lemma~\ref{lemma2:maindescent} lies in the estimation of $V(G_\kp)-V(\tilde G_\kp)$.
     However, we have to discuss by cases due to the restart step.
    \begin{enumerate}[1.]
        \item When $\trace\hat G_\kp\leq n\bar\kappa$, we have $\tilde G_\kp=\hat G_\kp$, $\trace G_\kp\leq n\bar\kappa$ and
        \begin{equation}
        V(G_\kp)-V(\tilde G_\kp) = \trace(G_\kp-G_\kp-\lambda_\kp \opid) \geq - n\lambda_\kp\,.
    \end{equation}
    \item When $\trace\hat G_\kp> n\bar\kappa$, we have $V(\hat G_\kp)>n\bar\kappa\geq nL$. Since in this case we set $\tilde G_\kp=L\opid$, we have $ V(\hat G_\kp)\geq V(\tilde G_\kp)$. We deduce that
    \begin{equation}
    \begin{split}
        V(G_\kp)-V(\tilde G_\kp)&= V(G_\kp)-V(\hat G_\kp) +V(\hat G_\kp)-V(\tilde G_\kp)\\&\geq V(G_\kp)-V(\hat G_\kp)  \\
       &\geq \trace(G_\kp-G_\kp-\lambda_\kp \opid)\\
       &\geq -n\lambda_\kp\\
        &\geq - n\lambda_\kp\,.\\
    \end{split}
    \end{equation}
    \end{enumerate}
    Therefore, no matter which case, we always have 
    \begin{equation}\label{ineq3:V1}
        V(G_\kp)-V(\tilde G_\kp)\geq - n\lambda_\kp\,.
    \end{equation}
    Summing \eqref{ineq3:V1} and \eqref{ineq2:V3},  we obtain the desired inequality. 
\end{proof}
Now, we are ready to prove our main theorem for Algorithm~\ref{Alg:SR1_da_PQN3}.
\begin{proof}[\textbf{Proof of Theorem~\ref{thm:main-gradreg}}]
    Following the same steps of the proof of Theorem~\ref{thm:main-grad} and using Lemma~\ref{lemma3:maindescent}, we obtain the global super-linear convergence rate  of Algorithm~\ref{Alg:SR1_da_PQN3}.
\end{proof}

\bibliography{main}
\bibliographystyle{siam}
\end{document}